\documentclass[12pt]{amsart}

\usepackage{color,graphicx,amssymb,latexsym,amsfonts,txfonts,amsmath,amsthm}
\usepackage{mathrsfs}
\usepackage{wrapfig}
\usepackage{epsfig}
\usepackage{psfrag}
\usepackage{array} 
\usepackage{multirow}
\usepackage{tikz} 
\usepackage{setspace} 
\usepackage{rotating} 
\voffset -15mm

\usepackage{amssymb}
\usepackage{graphicx}
\usepackage{pb-diagram}
\topmargin 2cm \textheight 212mm \textwidth 154mm \oddsidemargin
10pt \evensidemargin 10pt
\parskip 5pt

\newtheorem{thm}{Theorem}[section]

\newtheorem{lem}{Lemma}[section]
\newtheorem{prop}{Proposition}[section]

\def\D{{\mathcal D}}

\def\Z{{\mathbb Z}}
\def\O{{\mathcal O}}
\def\B{{\mathcal B}}
\def\G{{\mathcal G}}
\def\M{{\mathcal M}}
\def\T{{\mathcal T}}

\def\g{\gamma}
\def\S{\Sigma}
\def\s{\sigma}

\def\b{\beta}
\def\a{\alpha}
\def\d{\delta}

\def\Im{{\rm Im}\,}

\def\DArriba{{\mathcal D}_{\tilde\Sigma}}
\def\DAbajo{{\mathcal D}_{\Sigma}}

\date{\today}

\begin{document}

\title[Boundary of equisymmetric loci of Riemann surfaces with abelian symmetry]
 {Boundary of equisymmetric loci of Riemann surfaces with abelian symmetry}
\author{Raquel D\'{\i}az}
\address{Departamento de Álgebra, Geometr\'{\i}a y Topolog\'{\i}a. Facultad de Ciencias Matem\'aticas. Universidad Complutense de Madrid. Espa\~na}
\email{radiaz@mat.ucm.es}

\author{V\'ictor Gonz\'alez-Aguilera}
\address{Departamento de Matem\'atica. Universidad T\'ecnica Federico Santa Mar\'{\i}a. Valpara\'{\i}so, Chile}
\email{victor.gonzalez@usm.cl}
\thanks{\noindent
 {\it 2020 Mathematics Subject
    Classification}: Primary 32G15; Secondary 14H10.\\
  {\it Key words}: Moduli space, stable curves, equisymmetric stratification.\\
The first author was partially supported by  Project PID2020-114750GB-C32/AEI/10.13039/501100011033. }

\begin{abstract}
Let ${\mathcal M}_g$ be the moduli space  of compact connected Riemann surfaces of genus $g\geq 2$
 and  let $\widehat{{\mathcal M}_g}$ be  its  Deligne-Mumford compactification, which is stratified by the topological type of the stable Riemann surfaces.  We consider the equisymmetric loci in $\M_g$ corresponding to Riemann surfaces whose automorphism group is abelian and determine the topological type of the maximal dimension strata at their boundary.  For the  particular cases of the hyperelliptic and the cyclic $p$-gonal actions, we describe all the topological strata at the boundary in terms of trees with a fixed number of edges. 
 
\end{abstract}
\maketitle

\maketitle

\section{Introduction}

Let ${\mathcal M}_g$ be the moduli space of compact connected Riemann surfaces of genus $g\geq 2$ up to biholomorphism or, equivalently,   the space of hyperbolic surfaces of genus $g$ up to isometry, or the space of smooth projective curves of genus $g\geq 2$ defined over ${\mathbb C}$. As a space, it is easier to consider the Teichm\"uller space $\T_g$, i.e., the space of {\it marked} hyperbolic  surfaces of genus $g$, since it  is homoeomorphic to an open ball of $\mathbb C^{3g-3}$. The mapping class group ${\rm Mod}_g$ acts on $\T_g$ by changing the marking of a surface, and therefore the quotient $\T_g/ {\rm Mod}_g$  can be  identified to  $\M_g$. 
 This action is properly discontinuously and thus $\M_g$ is an orbifold whose  singular points   (when $g>2$) are those Riemann surfaces which  admit non-trivial automorphisms. 
This subset  is called the {\it  branch locus} and is denoted $\B_g$. It can be  stratified in a natural way according to the topological action of the isometry group of the points in $\B_g$, the strata are called {\it equisymmetric strata} or {\it loci} (see Section \ref{sec:TopActions}).  This stratification is the object of intense study, both determining all the strata and studying topological properties as the connectivity of $\B_g$. For instance Broughton \cite{Broughton-Classifying} provides    the 
classification of topological actions in genus 2 and 3    and Kimura \cite{kimura} that in genus  4. In a series of papers, Costa, Izquierdo and  Bartolini (see for instance \cite{costa0}, \cite{bartolini}, \cite{bartolini2}) study the orbifold structure of moduli space and the connectivity of its branched locus.

 On the other hand, Teichm\"uller space can be augmented by adding marked  stable Riemann surfaces (\cite{abikoff}, \cite{bers}, Section \ref{sec:Prelim-Augmented}). The resulting space $\widehat{\T_g}$ is not compact, but its quotient  $\widehat{\M_g}$ by the mapping class group is compact  and  coincides with the Deligne-Mumford compactification of moduli space, that is,  the set of stable curves, i.e. curves which have only nodes as singularities and with finite group of automorphisms   (\cite{deligne}, see \cite{Hubbard} for a detailed exposition on this). Intuitively, this compactification is saying  that the only  way of going to the infinity at moduli space is that the length of a multicurve (a subset of disjoint simple closed curves) is going to zero. 
 
 The previous  construction provides  a stratification to the  boundary of moduli space $\partial \M_g=\widehat{\M_g}\setminus \M_g$, each strata consisting on  a subset of stable Riemann surfaces with the same topological type.  
We will refer to this stratification as the topological stratification of $\partial \M_g$ and to its strata as {\it topological strata}.  It can be noticed that the topological type of a stable Riemann surface  is captured by its associated weighted stable graph.

A natural question  to ask is  which topological strata of $\partial \M_g$ appear at the boundary of a given equisymmetric locus $\mathcal E$. If $X_n$ is a sequence in  $\mathcal E$ going to the boundary, there is a curve   whose length along the sequence is going to 0. Since $X_n$ has   non-trivial  isometries, there is actually an equivariant multicurve whose length is going to zero. Quotienting by the isometry group, we obtain a curve going to zero along the sequence of  quotient orbifolds $\O_n$. Now, this process can be reversed: starting  with a curve  in the quotient orbifold and making its length going to zero produces an equivariant multicurve in $X_n$ with length going to zero. Decreasing to 0 the length of a multicurve is always  possible provided that all the components of the complement of the multicurve are hyperbolic. These multicurves are called {\it admissible}. 
Summing up, to find the topological strata in the boundary of an equisymmetric stratum $\mathcal E$ reduces to find the preimage of admissible multicurves  under a branched covering. 
In \cite{diaz1}, we addressed this question and  described a procedure to solve it. Also in that paper and in \cite{diaz2}, we applied this method to find all the topological strata at the boundary of the pyramidal equisymmetric locus. 
The case of the 1-complex dimensional equisymmetric strata for a family uniformized by a Fuchsian group of signature $s = (2, 2, 2, m), m \geq 3$ is studied in \cite{costa2}. In \cite{costa1} it is  shown that the completion of the 3-gonal equisymmetric space in $\widehat{\M_g}$ is connected for $g\geq 5$. 
In recent papers (\cite{BCI-1}, \cite{BCI-2}) Broughton, Costa and Izquierdo have studied the topology of the one-dimensional equisymmetric loci, which are punctured Riemann surfaces. Some of these punctures correspond to limiting stable, nodal surfaces in  $\widehat{\M_g}$, i.e., the topological strata at the boundary of the equisymmetric locus. 
An algebraic stack treatment of boundary of loci of curves can be seen in \cite{johannes}.

In this paper we focus on  the equisymmetric  strata corresponding to abelian actions. 
 For the  hyperelliptic and  the $p$-gonal actions, we  provide easy descriptions of the topological strata at the boundary of the corresponding equisymmetric loci in terms of trees with a bounded number of edges.    For any abelian action, we determine the topological strata of maximal dimension at the boundary of  the corresponding  equisymmetric locus.

\bigskip 

The structure of this paper is the following:
  In Section \ref{sec:MainThmInDiGo} we recall the main result in \cite{diaz1}: given a topological action $(G,\O,\Phi)$ on a surface $S$ 
  and an  admissible  multicurve $\S$ in $\O=S/G$, the dual graph of  its preimage  $p^{-1}(\S)$ is determined. 
  A first important simplification of this result occurs when the group $G$ is abelian, since we can work with homology instead homotopy. This will be seen  in Section \ref{sec:Simplification-Abelian}.
  A second simplification occurs when the   orbifold $\O$ is topologically a sphere, this case is studied in Section \ref{sec:genusO=0}.
  Both simplifications are present in the $p$-gonal and hyperelliptic equisymmetric strata. We will study these cases in sections \ref{sec:p-gonal} and \ref{sec:Hyperelliptic}. In particular, we find all the topological strata at the boundary of the equisymmetric loci corresponding to all  the cyclic $p$-gonal actions up to genus 4.    
  Finally, in section \ref{sec:Abelian}, we will study the case of $G$ any abelian group and the multicurve  $\S$ consisting on a single curve. This corresponds to finding the maximal dimension topological strata in the boundary of the equisymmetric locus.

\section{Preliminaries}\label{sec:Prelim}

\subsection{Equisymmetric loci of moduli space}\label{sec:TopActions}

We recall definitions and fix notation. For a general reference in this subject  see \cite{Broughton-Equisymmetric}. 
Consider an action  of a finite group $G$ on a closed  surface $S$, that is, a monomorphism    $\iota\colon G\to {\rm Hom}^+(S)$ into the group of preserving orientation homeomorphisms of $S$. It has an associated   
regular branched covering $p\colon S\to \O$ over the set of orbits $\O$,  and an associated epimorphism
 $\Phi\colon \pi_1(\O,*)\to G$ that assigns to a loop $\alpha$ in $\O$ the deck transformation that sends one end of a lift of $\alpha$ to the other one. 
The space $\O$ is a closed  orbifold whose singular locus consists on a finite  number of cone points. 
 
 The Euler characteristics of $S$ and $\O$ are related by the Riemann-Hurwitz formula $\chi(S)=|G|\chi(\O)$. We recall that Euler characteristic of an orbifold of signature $(g,c; m_1,\dots, m_r)$ is  $\chi(\O)=2-2g-c-\sum_{i=1}^r(1-\frac{1}{m_i})$, where $g$ is the genus of $\O$, $c$ the number of its boundary components and $m_1, \dots, m_r$ are the orders of its cone points. We remark that in the above situation, that is, if $\O$ is the quotient of a closed surface by the action of a group, the orbifold $\O$ does not have boundary components. Nevertheless,  along the paper we will need to use orbifolds with boundary and to compute their Euler characteristic.

  Any epimorphism $\Phi\colon \pi_1(\O,*)\to G$ whose kernel is the fundamental group of a surface  $S$ ({\it surface kernel epimorphism}) determines an action of $G$ on $S$. We denote this action by $(G,\O, \Phi)$.

Two actions 
$\iota\colon G\to {\rm Hom}^+(S)$ and , $\iota'\colon G\to {\rm Hom}^+(S')$, are  {\it topologically equivalent} if   there is an automorphism  $  \omega\in Aut(G)$ and a homeomorphism  $ h\colon S\to  S'$ such that 
 $\iota'(\omega(g))=h\circ \iota(g)\circ h^{-1}  $ for all $g\in G$.

The {\it equisymmetric stratum} or {\it locus} determined by the topological class of action represented by  $(G,\O,\Phi)$ is the subset $ \M_g(G,\O,\Phi)$ of points $X \in \M_g$ such that the action of $Aut^+(X) $ is topologically equivalent to the action   $(G, \O, \Phi)$.

  Broughton  (\cite{Broughton-Classifying})  determined all the topological actions in surfaces $S$ of  genera $g=2,3$  and Kimura (\cite{kimura}) determined those of genus $g=4$. We will use some of these topological actions  to exemplify our results.

\subsection{Admissible multicurves and weighted graphs.}\label{sec:WeightedGraphs}

Let   $\O$  be an orientable    2-orbifold, i.e., one  of signature $(g,c; m_1,\dots,m_r )$, where $g$ is the genus, $c$ is the number of connected components and $m_1,\dots, m_r$ are the orders of the cone points.   

\noindent
{\bf Definition.} Let $\O$ be an orbifold with $\chi(\O)<0$. An  {\it (admissible) multicurve} in $\O$ is a collection  $\Sigma =\{\g_1,\dots, \g_k\}$ of   simple closed curves or simple arcs joining pairs of cone points of order 2, all of them disjoint, and  such that   each component of $\O\setminus
\S$  has negative Euler characteristic.

We will represent multicurves in orbifolds by weighted graphs, defined next.

\noindent
{\bf Definition.} {\it (Weighted graph dual to a multicurve)}
Let $\S$ be a multicurve in a closed  orbifold $\O$. Its {\it dual graph}  $\D_{\S}$ is defined as follows:
\begin{enumerate}
\item its vertices are the connected components of $\O\setminus \S$;
\item its edges are the closed curves  in $\S$;
\item its semiedges are the arcs in $\S$;
\item an edge $ \g\in \S$ connects two vertices $\O_{j_1},\O_{j_2}$ if and only if $\g$ is in the boundary of $\O_{j_1}$ and $\O_{j_2}$; a semiedge $\g'\in \S$ is connected to the vertex $ \O_j$ if the arc $\g'$ is in the boundary of $\O_j$.
\item if $V=\O_j$ is a vertex, its weight is $w(V)=(g(\O_j);m_{i_1},\dots, m_{i_j})$ where $g(\O_j)$ is the genus of $\O_j$ and $m_{i_k}$ are the orders of the cone points of $\O_j$.

\end{enumerate}

Figure \ref{fig:DualGraph} shows a multicurve in an orbifold and its dual graph.

It is clear from the definition that the graph $\D_{\S}$ determines the topological information of the  pair $(\O,\S)$. 

\psfrag{2}{\small 2}
\psfrag{5}{\small 5}
\psfrag{7}{\small 7}
\psfrag{(0;)}{\small  $(0;\,)$}
\psfrag{(1;7)}{\small  $(1;7)$}
\psfrag{(0;)}{\small  $(0;\,)$}
 \psfrag{(0;5)}{\small  $(0;5)$}
 \psfrag{(0;3)}{\small  $(0;2)$}
\begin{figure}
\includegraphics[height=3.5cm,width=10cm]{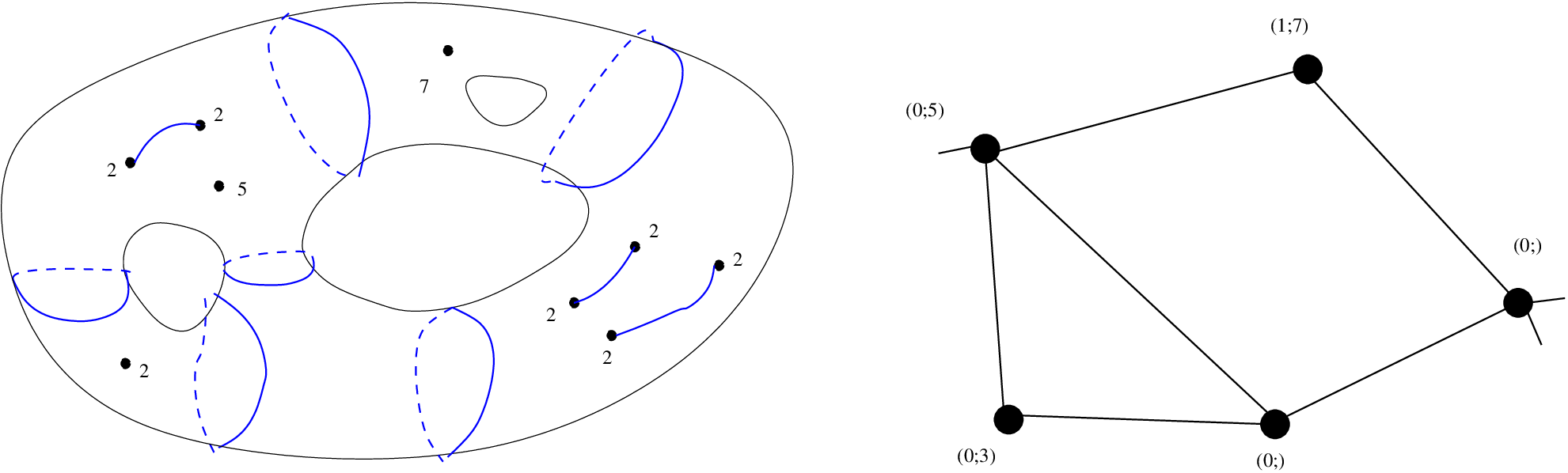}
\caption{Admissible multicurve and its dual graph}
\label{fig:DualGraph}
\end{figure}

 When the orbifold is a surface (there are no cone points), a multicurve is just a finite union of simple closed curves, and its dual graph is a weighted graph with no semiedges and  where the weight of each vertex  is the genus of the corresponding subsurface.

\subsection{Augmented moduli space $\widehat{\M_g}$}\label{sec:Prelim-Augmented}
Let $\S$ be an admissible multicurve on a surface $S$. The graph dual to $\S$ is called {\it stable graph}. The space $S/\S$ where each component of $\S$ has been collapsed to a point is called a {\it stable surface}. Each collapsed curve is called a {\it node}, and the complement of the nodes are called  {\it parts}. 
A {\it hyperbolic stable surface} is a stable surface with a hyperbolic structure on each part.

 The augmented moduli space of genus $g$ is the space $\widehat{\M}_g$ of hyperbolic  stable surfaces of genus $g$ up to isometry. This space provides a compactification of moduli space, once given a topology that intuitively works as follows. Consider a point  $X\in \widehat{\M}_g$ with nodes $N_1,\dots, N_r$.  A sequence $X_n\in \M_g$ converges to   $X\in \widehat{\M}_g$  if there is a family of geodesics $\mathcal{F}_n=\{\a_1^n,\dots, \a_r^n\}$ in $X_n$ so that:
\begin{itemize}
\item[(i)] the stable surfaces $X_n/\mathcal{F}_n$ are homeomorphic to $X$;
\item[(ii)]    the length of each $\a_i^n, i=1,\dots, r,\,$ tends to zero when $n\to \infty$; and 
\item[(iii)]  away from the curves $\a_i$ of $X_n$ and from the nodes of $X$, the hyperbolic surfaces $X_n$ are close to $X$ when $n\to \infty$.
 \end{itemize}
A way of formalizing this is by first considering marked surfaces and the augmented  Teichm\"uller space. See, for instance, \cite{Hubbard} for a detailed exposition on this.

 The augmented moduli space is stratified in the following way. For each stable graph  $\mathcal G$, we consider the stratum $\mathcal E(\mathcal G)$, defined as  the space   of stable hyperbolic surfaces $X$ whose associated graph $\G_X$ is isomorphic to ${\mathcal  G}$. This space is homeomorphic to  the product of the moduli spaces of the parts of $X$, which are punctured surfaces. 
Since there is a finite number of non-isomorphic  stable graphs of a fixed genus $g$, then $\widehat{   {\mathcal M}_g}$ decomposes into a finite number of strata, one for each stable graph. 
  This is called the {\it topological stratification} of $\widehat{\M}_g$.  
  
  Certainly, we can consider the closure of an equisymmetric stratum $\M_g(G,\O,\Phi)$ in $\widehat{\M_g}$, and its boundary, which is the center of attention of this paper.

\section{Statement of main theorem in \cite{diaz1}}\label{sec:MainThmInDiGo}

Consider a topological action on a surface $S $ given by a (surface kernel) epimorphism  $\Phi\colon \pi_1(\O,*)\to G$. Let $\S$ be an admissible multicurve in $\O$ and let $\tilde \S=p^{-1}(\S)$ be its preimage under the branched covering.   
The main theorem in \cite{diaz1} gives a criterion to determine the dual graph of $\tilde \S$ from  $\S$ and the epimorphism $\Phi$. We recall here that theorem. 

There are some new concepts and notation needed in the theorem.
For each suborbifold $\O_j$ of $\O\setminus \S$, we consider the restriction $\Phi_j\colon \pi_1(\O_j)\to G$. For each $\g\in \S$, we consider  similarly  the restriction $\Phi_{\g}\colon \pi_1(\g)\to G$, and  we also consider a dual curve $c_{\g}$. We will give details of these concepts after the statement of the theorem. For technical reasons in order to define the dual curves, we give an orientation to the graph $\D_{\S}$.
Finally, $\S^1$ denotes  the set of loops or semiedges of $\D_{\S}$,  $\S^2$ denotes the set of remaining edges  and $\S_j$ denotes the set of edges and semiedges adjacent to the vertex $\O_j$. We warn that, in order to slightly simplify the reading, we have adopted here the term  ``dual curves" which was not explicit in the original statement in \cite{diaz1}. 

\begin{thm}
\label{thm:MainDiaz1}
Consider a topological action on a surface   given by an  epimorphism  $\Phi\colon \pi_1(\O,*)\to G$ and let $\S$ be an admissible multicurve in $\O$. Then 
 the dual graph $\D_{\tilde \S}$ of the multicurve $\tilde \S=p^{-1}(\S)$ and a map     $p_*\colon \D_{\tilde{\S}}\to \D_{\S}$  are determined as follows.

\begin{enumerate}

\item The preimages under $p_*$ of a vertex $\O_j$ of $\D_{\S}$ (i.e., a component of $\O\setminus\S$) are denoted   $V_{j,g\Im\Phi_j}, g\in G$; thus the vertex $ \O_j$ has $\frac{|G|}{|\Im\Phi_j|}$ preimages.
\item The preimages of an edge $\g$ are denoted  $E_{\g,g\Im\Phi_{\g}}, g\in G$; thus the edge $\g$ has  $\frac{|G|}{|\Im\Phi_{\g}|}$ preimages.
\item    (Edges connecting vertices.) If $\g$ goes from $\O_{j_1}$ to $\O_{j_2}$, then the edge $E_{\g,g\Im\Phi_{\g}}$   joins the vertices $V_{j_1, g\Im\Phi_{j_1}}$ and  $V_{j_2, g{\Phi(c_{\g})}\Im\Phi_{j_2}}$.
 \item (Degrees of    vertices.) The degree of the vertex $V_{j,g\Im\Phi_j}$ 
is equal to 
 $$
 D_{j}= |\Im \Phi_j| \left( \sum_{\g\in \S_j \cap  \S^2 }\frac{1}{|\Im\Phi_{\g}|} +2\sum_{\g\in\S_j\cap  {\S^1}}\frac{1}{|\Im\Phi_{\g}|} \right).
  $$
 \item   (Weights of  vertices.) The weight $w^j$
 of the vertex $V_{j,g\Im\Phi_j }$ is equal to   
$$\quad \quad
w^j
=1 
 -\frac{1 }{2}\left(|\Im \Phi_j|\, \chi(\O_j)+ D_j\right).  
$$ 
 
\end{enumerate}

\end{thm}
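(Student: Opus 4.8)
The plan is to study the restriction of the regular branched covering $p\colon S\to\O$ over each piece of the decomposition of $\O$ cut out by $\S$, and then to reassemble the local pictures into the graph $\D_{\tilde\S}$ together with the quotient map $p_*\colon\D_{\tilde\S}\to\D_\S$ (which simply sends every preimage vertex or edge to the object it covers). Throughout I would use that $p$ is the \emph{regular} cover associated with $\Phi$, so that the fiber over the basepoint is a $G$-torsor and the monodromy of a loop $\alpha$ is $\Phi([\alpha])$.

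Statements (1) and (2) are the classification of coverings applied to the restrictions $p^{-1}(\O_j)\to\O_j$ and $p^{-1}(\g)\to\g$. Because $p$ is regular, the connected components of $p^{-1}(\O_j)$ are the orbits of the monodromy action of $\pi_1(\O_j)$ on the fiber $G$; this action is through $\Phi_j$ and hence by translation by elements of $\Im\Phi_j$, so the components correspond to the cosets $g\,\Im\Phi_j$, giving $|G|/|\Im\Phi_j|$ of them. Moreover $G$ permutes these components transitively, so each $V_{j,g\Im\Phi_j}$ is a regular cover of $\O_j$ with deck group $\Im\Phi_j$, of degree $|\Im\Phi_j|$. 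The same argument with $\pi_1(\g)=\langle[\g]\rangle$ and $\Im\Phi_\g=\langle\Phi([\g])\rangle$ gives (2).

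For (3) the delicate point --- and the step I expect to be the main obstacle --- is to match the coset labels of the two endpoints of a preimage edge coherently, which is exactly what forces us to orient $\D_\S$ and to introduce the dual curve $c_\g$. For an edge $\g$ running from $\O_{j_1}$ to $\O_{j_2}$, the curve $c_\g$ is a path joining the chosen basepoints of the two pieces and crossing $\g$ once; lifting $c_\g$ computes the change of trivialization between the two sides, so that the sheet labeled $g\,\Im\Phi_{j_1}$ is glued, along $E_{\g,g\Im\Phi_\g}$, to the sheet labeled $g\,\Phi(c_\g)\,\Im\Phi_{j_2}$. The work lies in checking that this label is well defined: $c_\g$ is only determined up to composition with loops in $\O_{j_1}$ and $\O_{j_2}$, so one must verify that the target is independent of these choices modulo $\Im\Phi_{j_1}$ on the left and $\Im\Phi_{j_2}$ on the right, which is where regularity of $p$ and the fixed orientation are genuinely used.

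Finally, (4) and (5) are Euler-characteristic bookkeeping on the already-identified component $V=V_{j,g\Im\Phi_j}$. For (4), the degree of $V$ in $\D_{\tilde\S}$ equals its number of boundary circles, and over a boundary curve $\g$ of $\O_j$ the degree-$|\Im\Phi_j|$ cover $V$ restricts to circles each of degree $|\Im\Phi_\g|$ (using $\Im\Phi_\g\subseteq\Im\Phi_j$), so there are $|\Im\Phi_j|/|\Im\Phi_\g|$ of them; an edge of $\S^2$ is incident to $\O_j$ once, whereas a loop or arc of $\S^1$ is incident twice, which yields the multiplicities $1$ and $2$ in the sum defining $D_j$. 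For (5), since $V\to\O_j$ has degree $|\Im\Phi_j|$, the orbifold Riemann--Hurwitz formula recalled in Section~\ref{sec:TopActions} gives $\chi(V)=|\Im\Phi_j|\,\chi(\O_j)$, where $\chi(\O_j)$ is the orbifold Euler characteristic of the cut-open piece (boundary circles included); comparing with $\chi(V)=2-2w^j-D_j$ for a surface of genus $w^j$ with $D_j$ boundary circles and solving for $w^j$ gives the asserted weight formula.
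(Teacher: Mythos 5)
The paper itself does not prove Theorem \ref{thm:MainDiaz1}: it is stated as a recollection of the main result of \cite{diaz1}, so there is no in-paper argument to compare yours against. Your outline is the standard one --- monodromy of the regular cover restricted to the pieces $\O_j$ and to the components of $\S$ for items (1)--(3), then counting boundary circles and applying orbifold Riemann--Hurwitz for (4)--(5) --- and as a sketch it is sound and presumably close to the argument of \cite{diaz1}. You also correctly isolate the two places where real work remains: the well-definedness of the target coset in (3) under the choices entering $c_{\g}$, and the identity $\chi(V)=|\Im\Phi_j|\,\chi(\O_j)$ with boundary circles included.

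The one step you treat too lightly is the semiedge case. For an arc $\g$ joining two cone points of order $2$, $\pi_1(\g)\cong\Z_2*\Z_2$ and $\Im\Phi_{\g}$ is generated by the images of the two order-$2$ meridians, which do \emph{not} lie in $\pi_1(\O_j)$ (only their product, the boundary of a disc neighbourhood of the arc, does). So the inclusion $\Im\Phi_{\g}\subseteq\Im\Phi_j$ that you invoke in (4) fails for semiedges --- in the hyperelliptic case a semiedge always has $\Im\Phi_{\g}\cong\Z_2$ while an even vertex has $\Im\Phi_j$ trivial --- and ``circles of degree $|\Im\Phi_{\g}|$ inside the degree-$|\Im\Phi_j|$ cover'' is not literally a count there. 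The correct bookkeeping is: the $|G|$ lifts of the open arc are glued in pairs at the preimages of the order-$2$ endpoints into $|G|/|\Im\Phi_{\g}|$ circles; a regular neighbourhood of each is an annulus with two boundary circles; and these $2|G|/|\Im\Phi_{\g}|$ boundary circles are distributed evenly among the $|G|/|\Im\Phi_j|$ components over $\O_j$, giving the $2|\Im\Phi_j|/|\Im\Phi_{\g}|$ contribution in item (4). Your ``incident twice'' factor of $2$ lands on the same number, so the formula comes out right, but the intermediate quantity $|\Im\Phi_j|/|\Im\Phi_{\g}|$ can be the non-integer $1/2$ for arcs, which signals that the justification, not just the notation, needs adjusting in that case.
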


{\bf Remarks.} (a) The theorem gives an action of $G$ on $\DArriba$ whose quotient map is $p_*$. 
Indeed, we can define
$$
g(V_{j,h\Im\Phi_j})=  V_{j,gh\Im\Phi_j}, \; g(E_{\g,h\Im\Phi_j})=  V_{\g,gh\Im\Phi_j}
$$
Notice that if  $E_{\g,h\Im\Phi_j}$ joins $V_{j_1,h\Im\Phi_{j_1}}$ and $V_{j_2,h\Phi(c_{\g})\Im\Phi_{j_2}}$, then  $g(E_{\g,h\Im\Phi_j})=E_{\g,gh\Im\Phi_j}$ joins $V_{j_1,gh\Im\Phi_{j_1}}=g(V_{j_1, h\Im\Phi_{j_1}})$ and $V_{j_2,g\Phi(c_{\g})h\Im\Phi_{j_2}}=g(V_{j_2,h\Phi(c_{\g})\Im\Phi_{j_2}})$. 

(b) The items (4) and (5) say that all the preimages by $p_*$ of a vertex of $\DAbajo$ have the same degree and the same weight. 
 
\noindent
{\bf Explanations.} In order to define the restrictions $\Phi_j,\Phi_{\g}$ we need to carefully choose base points and auxiliary paths, namely:
\begin{enumerate}

\item choose a base point $*_j$ in each suborbifold $\O_j$ (for simplicity, choose $*_1$ equal to the initial base point $*$) and  a base point $*_{\g}$ in each curve $\g\in \S$;
\item choose  simple  paths $\b_{j,\g}$ contained in $\O_j$ going from $*_j$ to $*_{\g}$ for all the edges $\g$ adjacent to $\O_j$. If $\g$ is a loop or semiedge of $\mathcal D_{\S}$, we choose two paths $\b_{j,\g}^a,\b_{j,\g}^b $  
 from $*_j$ to $*_{\g}$ such that $\b_{j,\g}^a(\b_{j,\g}^b)^{-1}$ intersects   $\g$ exactly once and,  in the case that $\g$ is a semiedge (arc in $\O_j$ joining two cone points of order 2), $\b_{j,\g}^a(\b_{j,\g}^b)^{-1}$ bounds a disc which contains just one of the endpoints of $\g$  and no other cone point.
   Moreover we choose all these paths   so that  they are disjoint except at their endpoints.
 
\end{enumerate}
 
 We finally choose an spanning tree $\T$ of $\D_{\S}$. All these choices allow to consider paths $\b_j$ going from $*$ to $*_j$ following the unique path in $\T$ from $\O_1$ to $\O_j$ and using the auxiliary paths $\b_{j,\g}$. Now we can define the restriction $\Phi_j$ as  $\Phi_j(\a)=\Phi(\b_j\a\b_j^{-1})$. 
 Similarly, we can consider paths $\b_{\g}$ for each $\g\in\S$ and define the restrictions $\Phi_{\g}$ to the fundamental group of the curve $\g$. We remark that if $\g$ is an arc, it is considered as a 1-orbifold with its endpoints cone points of order 2. Thus,  its fundamental group is $\Z_2*\Z_2$ generated by loops around the cone points. 
See \cite{diaz1} for more details. 
 
 Finally, the dual curves are as follows (see Figure \ref{fig:CaminosCGamma}): 
 \begin{enumerate}
 \item if the oriented edge  $\g$ of $\DAbajo$  goes from $\O_{j_1}$ to $\O_{j_2}$, different vertices, then $c_{\g}= \b_{j_1}\,\b_{j_1,\g}\, \b_{j_2,\g}^{-1}\,\b_{j_2}^{-1} $;
 \item  if $\g$ is a loop or a semiedge adjacent to the vertex $\O_j$, then $c_{\g}=  \b_{j}\, \b_{j,\g}^a(\b_{j,\g}^b)^{-1}\b_j^{-1}$. Notice that, if $\g$ is a semiedge, $\Phi(c_{\g})$ is one of the generators of $\Im\Phi_{\g}$. 
 
 \end{enumerate}

  \psfrag{*}{$*$}

\psfrag{*j1}{$*_{j_1}$}
\psfrag{*j2}{$*_{j_2}$}

\psfrag{*g}{$*_{\gamma}$}
\psfrag{*j}{$*_{j}$}
\psfrag{bj1}{$\beta_{j_1}$}
\psfrag{bj2}{$\beta_{j_2}$}
\psfrag{bg}{$\beta_{\gamma}$}
\psfrag{bj1g}{$\beta_{j_1,\gamma}$}
\psfrag{bj}{$\beta_{j}$}
\psfrag{bjga}{$\beta_{j,\gamma}^a$}
\psfrag{bjgb}{$\beta_{j,\gamma}^b$}

\psfrag{bj2g}{$\beta_{j_2,\gamma}$}

\psfrag{Oj1}{$\bf{\mathcal{O}_{j_1}}$}
\psfrag{Oj2}{$\bf{\mathcal{O}_{j_2}}$}
 \psfrag{D}{$\Delta$}
\psfrag{g}{\textcolor{red}{ $ \gamma$} }
\psfrag{d}{$ {\delta}$}

\psfrag{Oj}{$\bf{\mathcal{O}_{j}}$}
\psfrag{cg}{\textcolor{blue}{$c_{\gamma}$}}
  \begin{figure}
\center
\includegraphics[height=5.5cm,width=15.5cm]{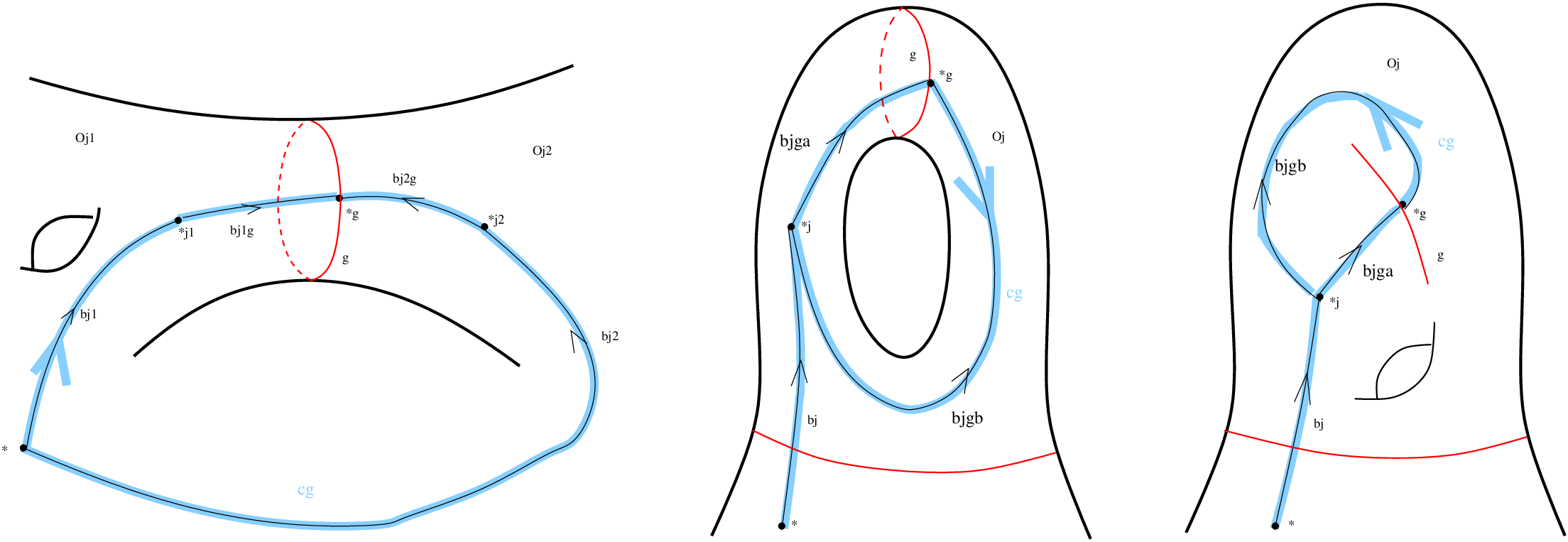}
\caption{Dual curves $c_{\gamma}$ 
}
\label{fig:CaminosCGamma}
\end{figure}

 \noindent
 {\bf Notation.} To the weighted  dual graph $\D_{\S}$ of a multicurve   $\S$ we can add the information given by the action: to  each vertex $\O_j$ we add the label $\Im \Phi_j$ and to each edge or semiedge $\g$ we add the labels  $\Im\Phi_{\g}$ and $\Phi(c_{\g})$.  Knowing this graph allows to obtain all the information (1)-(5) of Theorem \ref{thm:MainDiaz1}. We will call this graph the {\it extra decorated graph} associated to $\S$ (with respect to the action $\Phi$).

\section{First simplifications}\label{sec:FirstSimplifications}

Theorem \ref{thm:MainDiaz1} admits two important simplifications in the cases where the orbifold $\O$ has genus 0 and no cone points of order 2 and in the case where the group $G$ is abelian.

\subsection{Genus of $\O$   equals zero and   no cone points of order 2 }\label{sec:genusO=0}

Since the genus of $\O$ is zero, the dual graph $\D_{\S}$ of any multicurve $\S$ on $\S$ is a tree. Moreover, since there are no cone points of order 2, this tree has no semiedges. 
  As a consequence, all the dual curves $c_{\g}$ are homotopically trivial and $\Phi(c_{\g})$ is the identity. Item (3) in Theorem \ref{thm:MainDiaz1} thus says that if $\g$ is an edge of $\D_{\S}$ going 
from $\O_{j_1}$ to $\O_{j_2}$, then for each $g\in G$ the edge $E_{\g,g\Im\Phi_{\g}}$ joins the vertices $V_{j_1,g\Im\Phi_{j_1}}$ and $V_{j_2,g\Im\Phi_{j_2}}$. Then we can interpret $\D_{\tilde\S}$ as $(\sqcup_{g\in G}  \mathcal D_{\S}^g)/\sim$, where  $\mathcal D_{\S}^g$ is a copy of $\D_{\S}$ and  where two vertices
$V_{j,g\Im\Phi_j}, V_{j,g'\Im\Phi_j}$ are identified if and only if $g\Im\Phi_j=g'\Im\Phi_{j}$ and similarly with the edges, by (1) and (2) of that theorem. Thus we have:

\begin{prop} \label{prop:GenusO=0}
Consider a topological action given by an epimorphism  $\Phi\colon \pi_1(\O,*)\to G$ where the signature of $\O$ is equal to $(0; m_1,\dots, m_k)$, $m_i\geq3$. Let $\S$ be an admissible multicurve in $\O$. Then   $\D_{\tilde\S}=(\sqcup_{g\in G}  \mathcal D_{\S}^g)/\sim$, where  $\mathcal D_{\S}^g$ is a copy of $\D_{\S}$, 
$V_{j,g\Im\Phi_j}\sim  V_{j,g'\Im\Phi_j}$   if and only if $g(g')^{-1}\in \Im\Phi_j $ and $E_{\g,g\Im\Phi_{\g}}\sim  E_{\g,g'\Im\Phi_{\g}}$   if and only if $g(g')^{-1}\in\Im\Phi_{\g}$.
\end{prop}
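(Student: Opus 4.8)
The plan is to reduce the general description in Theorem~\ref{thm:MainDiaz1} to the stated quotient by first killing all the holonomy terms $\Phi(c_{\g})$, and then reassembling $\D_{\tilde\S}$ from the resulting ``rigid'' incidence data. The whole argument rests on a single topological fact---that in this signature every curve of $\S$ is a tree edge---so I would open by recording it: since $\O$ has genus $0$, every multicurve separates, so $\D_{\S}$ is a tree; and since $\O$ has no cone points of order $2$, the multicurve contains no arcs, so $\D_{\S}$ has no semiedges and (being a tree) no loops. Consequently every $\g\in\S$ is an honest edge joining two \emph{distinct} vertices $\O_{j_1},\O_{j_2}$, and, because a spanning tree of a tree is the whole tree, $\g$ lies in $\T$.

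Next I would show $\Phi(c_{\g})=e$ for every such $\g$. Orient $\g$ so that $\O_{j_1}$ is the vertex nearer the root $\O_1$ in $\T$; then the unique tree path to $\O_{j_2}$ is the tree path to $\O_{j_1}$ followed by the crossing of $\g$, so by the very construction of the auxiliary paths the chosen path satisfies $\b_{j_2}=\b_{j_1}\,\b_{j_1,\g}\,\b_{j_2,\g}^{-1}$. Substituting this into $c_{\g}=\b_{j_1}\,\b_{j_1,\g}\,\b_{j_2,\g}^{-1}\,\b_{j_2}^{-1}$ makes the loop telescope to the constant loop, so $c_{\g}$ is null-homotopic and $\Phi(c_{\g})=e$ (the symmetric computation gives the same conclusion if instead $\O_{j_2}$ is the nearer vertex). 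With all $\Phi(c_{\g})$ trivial, item~(3) of Theorem~\ref{thm:MainDiaz1} collapses to the assertion that $E_{\g,g\Im\Phi_{\g}}$ joins $V_{j_1,g\Im\Phi_{j_1}}$ and $V_{j_2,g\Im\Phi_{j_2}}$; that is, the coset label $g$ is transported unchanged across every edge.

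Finally I would build the claimed isomorphism of graphs explicitly. For each $g\in G$ let $\mathcal D_{\S}^g$ be a copy of $\D_{\S}$ and define $q\colon \bigsqcup_{g\in G}\mathcal D_{\S}^g\to \D_{\tilde\S}$ by sending the vertex $\O_j$ of $\mathcal D_{\S}^g$ to $V_{j,g\Im\Phi_j}$ and the edge $\g$ of $\mathcal D_{\S}^g$ to $E_{\g,g\Im\Phi_{\g}}$. The simplified item~(3) is exactly the statement that $q$ is a morphism of graphs, since it carries the two endpoints of $\g$ inside the copy $g$ to the two endpoints of $E_{\g,g\Im\Phi_{\g}}$; meanwhile items~(1) and~(2) say that the vertices $V_{j,\,\cdot}$ and edges $E_{\g,\,\cdot}$ range over exactly the cosets of $\Im\Phi_j$ and of $\Im\Phi_{\g}$, so $q$ is surjective on both cells. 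Two copies hit the same cell precisely when their coset labels coincide, i.e.\ $q(\O_j^{g})=q(\O_j^{g'})$ iff $g\Im\Phi_j=g'\Im\Phi_j$, and likewise for edges; this is the relation $\sim$ of the statement. Hence $q$ induces the asserted isomorphism $\D_{\tilde\S}\cong\bigl(\bigsqcup_{g\in G}\mathcal D_{\S}^g\bigr)/\!\sim$.

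I expect the only genuine obstacle to be the second step: one must read off from the construction of the paths $\b_j$ (following the spanning tree and concatenating the auxiliary paths $\b_{j,\g}$) that for a tree edge the dual curve really does telescope to a point. Once that telescoping is justified, the first and third steps are formal bookkeeping with Theorem~\ref{thm:MainDiaz1}.
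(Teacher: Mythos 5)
Your proof is correct and takes essentially the same approach as the paper: the observations that $\D_{\S}$ is a tree without semiedges, that consequently every dual curve $c_{\g}$ is null-homotopic, and that the quotient description then follows from items (1)--(3) of Theorem \ref{thm:MainDiaz1} are exactly the paper's argument. Your explicit telescoping computation showing $\Phi(c_{\g})=e$ merely spells out a step the paper asserts without detail (and is indeed the right justification, since a genus-zero argument alone would not rule out $c_{\g}$ enclosing cone points).
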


\subsection{Abelian case reduces to homology}\label{sec:Simplification-Abelian}

Suppose that $G$ is abelian. Then the homomorphism $\Phi\colon\pi_1(\O,*)\to G$ factors through the abelianized group  $H_1(\O)$ of $ \pi_1(\O,*)$.
By abuse of notation, we will still call $\Phi$ to the homomorphism from the homology group. 
 If the signature of $\O$ is $(\s;m_1,\dots, m_k)$, then $H_1(\O)$ is generated by $\a_i,\b_i, \delta_l$, $1\leq i\leq \sigma, \, 1\leq l\leq k$, where the $\a_i,\b_i$ are a homology basis of the underline space of  $\O$ and the $\d_l$ are meridians about the cone points. This is a great simplification, since we do not have to take care of the base points, in particular we do not need the choices of any base point, nor the choices of the paths $\b_{j,\g}$.  
The restrictions $\Phi_j$ and $\Phi_{\g}$ are easily computed: it is needed to express the homology  generators of $\O_j $ or $\g$ in terms of the homology generators of $\O$. For example, if $\O_j$ has genus zero, $\Phi_j$ only depends on the meridians about the cone points contained in $\O_j$. 

The homology classes of the dual curves $c_{\g}$ are easily chosen when $\g$ is contained in the boundary of just one suborbifold $\O_j$: if $\g$ is an arc, $c_{\g}$ is a meridian about one of  if its endpoints, and if $\g$ is a simple closed curve, $c_{\g}$ is  a simple closed curve in the closure of $\O_j$ intersecting $\g$ exactly one.   
If $\g$ is in the boundary of two different suborbifolds, and more precisely, if  $\g$ is an oriented edge in $\D_{\S}$ from $\O_{j_1}$ to $\O_{j_2}$, consider the oriented  closed path $\omega$  in $\D_{\S}$ formed by the oriented edge $\g$ followed by the path in the spanning tree from $\O_{j_2}$ to $\O_{j_1}$. Now, take a simple oriented closed curve $c_{\g}$ in $\O$ traveling through the  suborbifolds indicated by   $\omega$.  

To appreciate these simplifications, let us compare with an example of  a non-abelian action. 

\noindent
{\bf Example.} The pyramidal action is a certain  action of the dihedral group $D_n$ on a surface $S_n$  with quotient the orbifold $\O$ with signature  $(0;2,2,2,2,n)$.
In   \cite{diaz1} we considered  examples of multicurves $\S =\{\g\}$ and $\Sigma'=\{\g'\}$ where  both $\g,\g'$ are arcs joining the same cone points of $\O$ although not homotopic. It was shown that the subgroup  $\Im\Phi_{\g}$ has order 2, while $\Im\Phi_{\g'}$ is the whole group if $n$ is odd, and has order $n$ if $n$ is even. For $n>2$ this implies that the number of edges of $\DArriba$ and $\D_{\widetilde{\S'}}$  are different, so they are different graphs and provide different topological strata in the boundary.

This phenomenon occurs because the dihedral group is not abelian. Indeed, if we have any abelian action on $S_n$ with  quotient an orbifold of signature $(0;2,2,2,2,n)$, and we consider the multicurves $\S =\{\g\}$ and $\Sigma'=\{\g'\}$ as before, then $\O\setminus\g$ and $\O\setminus\g'$ have the same homology generators, and so do $\g$ and $\g'$, and we can choose the dual curves $c_{\g},c_{\g'}$ to be equal. Thus, the graphs $\D_{\S}, \D_{\S'}$ are equal and provide the same topological stratum in the boundary.

\section{Cyclic $p$-gonal locus, $p\geq 3$}\label{sec:p-gonal}
\noindent{\bf Definition.} A {\it cyclic $p$-gonal action}, for $p$ a prime number, is an action of $\Z_p$ on a surface $S_{\!\!g}$ of genus $g$ whose quotient orbifold $\O$  has signature $(0; p,\dots^M, p)$.
Thus, when $p\geq 3$, these actions satisfy both simplifications explained in Section \ref{sec:FirstSimplifications}, so we will study this case in this section.

The homology group of    $\O$ with the above signature is generated by oriented  loops $x_1,\dots,$ $ x_M$ around the cone points $q_1,\dots, q_M$ of $\O$. We take all these loops with the same orientation so that its product $x_1\dots x_M$ is trivial.  If $a$ is the generator of $\Z_p$, the epimorphism $\Phi\colon H_1(\O)\to G$ with   $\Phi(x_i)=a^{m_i}$ is denoted briefly by its {\it generating vector}  $(a^{m_1},\dots, a^{m_M})$. We say that $m_i$ is the {\it exponent} of the cone point $q_i$.

 The Riemann-Hurwitz formula and the Riemann existence theorem for topological actions permit obtain all  the $p$-gonal actions, as follows. 

 \begin{prop}\label{prop:Epi-p-gonal}
 The  epimorphism $\Phi\colon H_1(\O)\to \Z_p$ with generating vector $(a^{m_1},\dots, a^{m_M})$ determines a topological action on a surface $S$ with quotient an orbifold with signature $(0; p,\dots^M, p)$ if and only if $m_i\in\{ 1,\dots, p-1\}$ for all $i $ and    $ \sum_{i=1}^M m_i$ is multiple of $p$. The genus of the surface $S$ is  and  $g= \frac{ (p-1)(M-2)}{2}$.
\end{prop}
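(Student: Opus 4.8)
The plan is to recognize the two stated conditions as precisely those that make $\Phi$ a \emph{surface kernel epimorphism}, and then to read off the genus from the Riemann--Hurwitz formula. Recall that, by the Riemann existence theorem for topological actions invoked above, a homomorphism $\Phi\colon\pi_1(\O)\to G$ determines a $G$-action on a closed surface $S$ with quotient orbifold $\O$ exactly when $\Phi$ is surjective and its kernel is torsion-free; since all torsion in $\pi_1(\O)$ comes from (conjugates of powers of) the elliptic generators $x_i$, the torsion-free requirement amounts to asking that each $\Phi(x_i)$ have order equal to the order $p$ of $x_i$. As $G=\Z_p$ is abelian, $\Phi$ factors through $H_1(\O)$ and the images $\Phi(x_i)=a^{m_i}$ are unchanged, so all three requirements (well-defined, surjective, torsion-free) can be verified directly from the generating vector.

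First I would treat well-definedness. For genus zero the generators satisfy $x_1\cdots x_M=1$ together with $x_i^p=1$; the relations $x_i^p=1$ impose nothing, while $x_1\cdots x_M=1$ forces $a^{\sum m_i}=1$, i.e.\ $\sum_{i=1}^M m_i\equiv 0\pmod{p}$. Next, the torsion-free (equivalently, order-$p$) condition: for $0<k<p$ one has $\Phi(x_i^{\,k})=a^{k m_i}=1$ iff $p\mid k m_i$, and since $p$ is prime with $0<k<p$ this holds iff $p\mid m_i$; hence the kernel is torsion-free iff $m_i\not\equiv 0\pmod p$, that is $m_i\in\{1,\dots,p-1\}$ for every $i$. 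Finally, surjectivity is automatic: these constraints force $M\ge 2$ and every $m_i\ne 0$, and in $\Z_p$ with $p$ prime any nonzero element generates the group. Running the same three computations in reverse shows the conditions are also necessary, which gives the ``if and only if''.

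For the genus I would apply Riemann--Hurwitz, $\chi(S)=|G|\,\chi(\O)=p\,\chi(\O)$. Using the Euler characteristic formula recalled in Section~\ref{sec:TopActions} for a genus-zero orbifold with no boundary and $M$ cone points of order $p$,
\[
\chi(\O)=2-M\Bigl(1-\tfrac{1}{p}\Bigr)=2-\tfrac{M(p-1)}{p},
\]
so $\chi(S)=2p-M(p-1)$. Equating this with $\chi(S)=2-2g$ gives $2g=(p-1)(M-2)$, i.e.\ $g=\tfrac{(p-1)(M-2)}{2}$, as claimed.

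I expect the only genuinely delicate point to be the torsion-free condition: one must check at the level of $\pi_1(\O)$, not merely $H_1(\O)$, that no nontrivial power (or conjugate thereof) of $x_i$ lies in $\ker\Phi$, so that $S$ is an honest closed surface rather than again an orbifold. Primality of $p$ is exactly what makes this transparent, collapsing the requirement to $m_i\ne 0$; for a composite branching order one would instead have to ensure $\gcd(m_i,\,\text{order})$ realizes the full order, and the bookkeeping would be correspondingly heavier.
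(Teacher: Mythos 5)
Your proof is correct and follows exactly the route the paper intends: the paper states this proposition without proof, attributing it to the Riemann existence theorem (surjectivity plus torsion-free kernel, which for prime $p$ reduces to $m_i\not\equiv 0$ and $\sum m_i\equiv 0\pmod p$) and the Riemann--Hurwitz formula for the genus. Your write-up simply supplies the standard details the authors left implicit, and all of them check out.
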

\noindent{\bf Remark.} The above proposition does not say anything about the topological equivalence of the actions. See \cite{Broughton-Classifying} and \cite{kimura} to find all the topological inequivalent $p$-gonal actions on a surface of genus 2, 3 and 4. 
For instance, in genus 4 there are two inequivalent 3-gonal actions and three inequivalent 5-gonal actions.

Let $\S$ be a multicurve in $\O$. 
Since $\Z_p$ has only the two trivial subgroups, items (1) and (2) of  Theorem \ref{thm:MainDiaz1} say that the preimage of any vertex  of $\DAbajo$ is either 1 vertex or $p$ vertices, and the same for any edge of $\DAbajo$. We will say that  a vertex $\O_j$ (resp. an  edge $\g$) of $\DAbajo$ is {\it multiple of $p$} if its preimage is a set of $p$ vertices (resp.  edges) of $\DArriba$  or, equivalently,  if $\Im\Phi_j=\{0\}$ (resp. $\Im\Phi_{\g}=\{0\}$).

 It is easy to see whether a vertex or an edge is multiple of $p$, this is done in the following lemma. 

\begin{lem} \label{lem:p-gonal}
Let $\Phi\colon H_1(\O)\to \Z_p$ be a cyclic $p$-gonal action with generating vector $(a^{m_1},\dots,$ $ a^{m_M})$. Let $\S\subset \O$ be an admissible multicurve.
Then
\begin{itemize}
\item[(i)] An edge $\g$ of $\D_{\S}$ is multiple of $p$ if
and only if the sum of the exponents of the cone points contained in a disc bounded by $\g$ is multiple of $p$. 
\item[(ii)] A vertex    $\O_j$ of $\D_{\S}$ is multiple of $p$
 if and only if the suborbifold $\O_j$ does not contain any cone point and all the edges $\g$ incident to the vertex $\O_j$ are multiple of $p$. In other words, if $c_j+n_j=0$, where 
  $c_j$ is the number of cone points contained in $\O_j$
  and $n_j$ is the number of edges  not multiple of $p$ that are incident to $\O_j$.

\item[(iii)] If $\O_j$ is a vertex not multiple of $p$ then $c_j+n_j\geq 2$. 
\end{itemize}
\end{lem}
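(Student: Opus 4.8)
The plan is to exploit that $\O$ is a sphere with all cone points of order $p\ge 3$, so that $\S$ consists only of simple closed curves, each of which separates $\O$ into two discs, and the dual graph $\D_{\S}$ is a tree with no semiedges (Section \ref{sec:genusO=0}). The engine of the whole argument will be the identification of the homology class of each curve: if $\g\in\S$ bounds a disc $D$ containing the cone points $q_{i_1},\dots,q_{i_s}$, then $[\g]=x_{i_1}+\cdots+x_{i_s}$ in $H_1(\O)$, since $\g$ is homologous to $\partial D$, whose class is the sum of the meridians about the cone points enclosed. I will work in homology, as justified in Section \ref{sec:Simplification-Abelian}, and use repeatedly that, $p$ being prime, a cyclic subgroup of $\Z_p$ is either trivial or all of $\Z_p$; hence being ``multiple of $p$'' is equivalent to the relevant restriction having trivial image.

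For (i) I would note that $\Im\Phi_{\g}$ is generated by $\Phi([\g])=a^{m_{i_1}+\cdots+m_{i_s}}$, which is trivial precisely when $\sum_k m_{i_k}\equiv 0 \pmod p$; since the exponent sums over the two discs bounded by $\g$ add up to the total, it is immaterial which disc is used. For (ii) I would describe $H_1(\O_j)$ for the planar suborbifold $\O_j$: it is generated by the meridians $x_i$ about the cone points contained in $\O_j$ together with the boundary curves of $\O_j$, which are exactly the edges incident to $\O_j$. Thus $\Im\Phi_j$ is generated by the elements $a^{m_i}$, one for each cone point of $\O_j$, and the elements $\Phi(\g)$, one for each incident edge. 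By Proposition \ref{prop:Epi-p-gonal} every exponent satisfies $1\le m_i\le p-1$, so $a^{m_i}\neq 0$; hence $\Im\Phi_j=\{0\}$ forces $\O_j$ to contain no cone point ($c_j=0$) and every incident edge to be multiple of $p$ ($n_j=0$), and the converse is immediate. This is exactly the condition $c_j+n_j=0$.

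Part (iii) is where the global structure enters, and I expect it to be the main point. The idea is to record a conservation law: since $\D_{\S}$ is a tree, deleting $\O_j$ splits the sphere into $\O_j$ and the discs $D_{\g}$ lying beyond each incident edge $\g$, and the cone points are partitioned among these pieces. Writing $T_j=\sum_{q_i\in\O_j}m_i$ and $S_{\g}$ for the exponent sum in $D_{\g}$, the global relation $x_1\cdots x_M=1$, i.e. $\sum_{i=1}^M m_i\equiv 0\pmod p$ from Proposition \ref{prop:Epi-p-gonal}, becomes
\[
T_j+\sum_{\g\ \text{incident to}\ \O_j} S_{\g}\equiv 0 \pmod p.
\]
Assuming $\O_j$ is not multiple of $p$, part (ii) gives $c_j+n_j\ge 1$, and I would rule out $c_j+n_j=1$ by the two possibilities. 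If $c_j=1,\,n_j=0$, then $T_j=m_i\not\equiv 0$ for the unique cone point while every $S_{\g}\equiv 0$, so the displayed relation forces $m_i\equiv 0$, a contradiction. If $c_j=0,\,n_j=1$, then $T_j=0$ and a single incident edge $\g_0$ has $S_{\g_0}\not\equiv 0$ while all others vanish, so the relation forces $S_{\g_0}\equiv 0$, again a contradiction. Hence $c_j+n_j\ge 2$.

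The only delicate point is the bookkeeping of orientations in the computation of $[\g]$ and in the conservation law: one must orient the meridians coherently so that their total class is trivial, and check that, viewed as a boundary component of $\O_j$, each incident edge contributes exactly the exponent sum of the disc on its far side. Once the partition of the cone points into $\O_j$ and the discs $D_{\g}$ is set up from the tree structure, the signs are harmless because each $m_i$ is counted exactly once, so the conservation law holds modulo $p$ independently of the orientation choices; this is the step I would write out most carefully.
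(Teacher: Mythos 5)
Your proof is correct and follows essentially the same route as the paper: identify $[\g]$ with the sum of the meridians it encloses for (i), describe $H_1(\O_j)$ by the enclosed meridians and boundary curves for (ii), and use the global relation $\sum_i m_i\equiv 0\pmod p$ together with the tree structure to exclude $c_j+n_j=1$ in (iii). The only difference is cosmetic: you make the conservation law and the orientation bookkeeping explicit, which the paper leaves implicit.
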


\begin{proof}
(i) According to Theorem \ref{thm:MainDiaz1}(2), the edge $\g$ has $|G/\Im\Phi_{\g}|$ preimages. Now, $\Phi_{\g}$ is the restriction of $\Phi$ to the homology group of $\g$ (since $G=\Z_p$ is abelian).
Suppose the curve $\g$ bounds a disc with containing the  cone points  $q_{i_1},\dots, q_{i_k}$. Since the loops $x_i$ have been chosen with the same orientation, the generator of the homology of $\g$ is $x_{i_1}   \dots   x_{i_k}$. Therefore, 
 $\Im\Phi_{\g}$ is generated by $a^{m_{i_1}+\dots +m_{i_k}}$. If ${m_{i_1}+\dots +m_{i_k}}$ is multiple of $p$ then  $\Im\Phi_{\g}$ is trivial, and then $\g$ lifts to $p$ edges. Otherwise,   $\Im\Phi_{\g}$ is the total $G$  and  $\g$ lifts to just one edge.

(ii) According to Theorem \ref{thm:MainDiaz1}(1) and in a similar way as before, we have to determine when the subgroup $\Im\Phi_{j}$ is trivial or the total group. Because $\O_j$ has genus 0, the subgroup $H_1(O_j)$ is generated by loops around the cone-points contained in $\O_j$ and by the boundary components of $\O_j$, which correspond to the edges of $\D_{\S}$ incident to the vertex $\O_j$. Therefore, $\Im\Phi_{j}$ is trivial if and only if  the condition written in the statement holds.

(iii) By the previous item,  $n_j+c_j\not=0$. Suppose that  $n_j=1$.
  By item (i) and since $n_j=1$, the sum of exponents of the cone points  outside $\O_j$ is not multiple of $p$. Since   the sum of all the exponents is multiple of $p$ (Proposition \ref{prop:Epi-p-gonal}), there must be some cone point in $\O_j$, thus $c_j\geq 1$. 
Finally, $n_j=0,c_j=1$ also contradicts Proposition \ref{prop:Epi-p-gonal} since no exponent is multiple of $p$. 
\end{proof}

We can now completely describe the graph $\DArriba $ for any multicurve $\S$ in $\O$, therefore the strata in the boundary of the $p$-gonal equisymmetric stratum.

\begin{thm}\label{thm:p-gonal}
Let $\Phi\colon H_1(\O ,*)\to \Z_p$ be a cyclic $p$-gonal action. Let $\S\subset \O$ be an admissible multicurve and let $\DAbajo$ its dual graph. Then the dual graph $\DArriba$ of $\,\tilde\S$ is obtained as $(\sqcup_{t=1}^p\D^t)/\hspace{-0.1cm}\sim$ where $\D^t$ are copies of $\D_{\S}$ and $\sim$ is as follows. We denote $V^t_j$  the vertex of $\D^t$ corresponding to the vertex $V_j$ of $\D_{\S}$, and similarly with the edges. Then:

\begin{enumerate}

\item  $V^t_j\sim V^s_j$ if and only if   $V_j$ is  not multiple of $p$ .

\item Similarly, $E^t_j\sim E^s_j$ if and only if $E_j$ is   not multiple of $p$.

\item If a vertex $V=\O_j$  is multiple of $p$, then the degree $D_j$ of any of its preimages is equal to  the degree of $V$ and its weight $w^j$ is equal to zero.

\item  If a vertex $V=\O_j$ is not multiple of $p$ then the  degree of any of its preimages is equal to $D_j=p\,m_j+n_j$, where $m_j$ is the number of edges multiple of $p$ that are incident to $\O_j$  and $n_j$ is the number of edges  not multiple of $p$ that are incident to $\O_j$.  Its weight is $w^j =\frac{p-1}{2} (n_j+c_j-2)  $, where $c_j$ is the number of cone points contained in $\O_j$.
\end{enumerate} 
\end{thm}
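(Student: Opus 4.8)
The plan is to derive Theorem~\ref{thm:p-gonal} as a direct specialization of Theorem~\ref{thm:MainDiaz1} to the cyclic group $\Z_p$, using Lemma~\ref{lem:p-gonal} to translate the abstract conditions $\Im\Phi_j=\{0\}$ and $\Im\Phi_{\g}=\{0\}$ into the combinatorial ``multiple of $p$'' conditions. Since $\Z_p$ is abelian and $\O$ has signature $(0;p,\dots,p)$, both simplifications of Section~\ref{sec:FirstSimplifications} apply: $\DAbajo$ is a tree with no semiedges (genus $0$, no order-$2$ cone points), so Proposition~\ref{prop:GenusO=0} already gives the quotient description $\DArriba=(\sqcup_{g\in\Z_p}\D_{\S}^g)/\sim$. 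The first task is to observe that because $\Z_p$ has only the trivial subgroups $\{0\}$ and $\Z_p$, every vertex and edge of $\DAbajo$ has either $1$ preimage (when $\Im\Phi_j=\Z_p$, i.e.\ not multiple of $p$) or exactly $p$ preimages (when $\Im\Phi_j=\{0\}$, i.e.\ multiple of $p$).

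First I would establish parts (1) and (2). By Proposition~\ref{prop:GenusO=0}, two copies $V_j^t,V_j^s$ are identified iff $g(g')^{-1}\in\Im\Phi_j$. When $V_j$ is not multiple of $p$ we have $\Im\Phi_j=\Z_p$, so the identification is total and all $p$ copies collapse to a single vertex; when $V_j$ is multiple of $p$ we have $\Im\Phi_j=\{0\}$, so no two distinct copies are identified and the $p$ copies stay distinct. This is exactly statement (1), and the identical argument with $\Im\Phi_{\g}$ gives (2). Here Lemma~\ref{lem:p-gonal}(i),(ii) is what makes ``multiple of $p$'' the operative criterion.

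Next I would compute the degrees and weights in (3) and (4) by plugging into items (4) and (5) of Theorem~\ref{thm:MainDiaz1}. Since there are no semiedges, the degree formula reduces to $D_j=|\Im\Phi_j|\sum_{\g\in\S_j}\frac{1}{|\Im\Phi_{\g}|}$. For a vertex multiple of $p$, Lemma~\ref{lem:p-gonal}(ii) forces every incident edge to be multiple of $p$ as well, so $|\Im\Phi_j|=1$ and each $|\Im\Phi_{\g}|=1$, whence $D_j$ equals the number of incident edges, i.e.\ $\deg V$; for the weight, $\chi(\O_j)=2-0-(\text{number of boundary components})=2-\deg V$ because $\O_j$ is a genus-$0$ subsurface with no cone points, and substituting $|\Im\Phi_j|=1$, $D_j=\deg V$ into $w^j=1-\tfrac12(|\Im\Phi_j|\chi(\O_j)+D_j)$ gives $w^j=0$. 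For a vertex not multiple of $p$, we have $|\Im\Phi_j|=p$, each edge multiple of $p$ contributes $\frac{1}{|\Im\Phi_{\g}|}=1$ and each edge not multiple of $p$ contributes $\frac{1}{p}$, so $D_j=p\bigl(m_j+\tfrac{n_j}{p}\bigr)=p\,m_j+n_j$; the weight then follows by computing $\chi(\O_j)=2-c_j-(m_j+n_j)-\sum(1-\tfrac1p)$ over the $c_j$ cone points of order $p$ inside $\O_j$ and substituting into the weight formula, where the $(1-\tfrac1p)$ terms combine with $D_j$ to yield $w^j=\frac{p-1}{2}(n_j+c_j-2)$.

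I expect the only real obstacle to be bookkeeping the Euler characteristic of the suborbifold $\O_j$ correctly: one must remember that $\O_j$ is an orbifold with boundary whose boundary circles are exactly the incident edges $m_j+n_j$ in number, that it carries $c_j$ cone points of order $p$, and that the convention $\chi(\O_j)=2-2g(\O_j)-(\text{boundary})-\sum_i(1-\tfrac1{m_i})$ with $g(\O_j)=0$ applies. The cancellation that turns $p\,\chi(\O_j)+D_j$ into $(p-1)(n_j+c_j-2)$ is a short but delicate algebraic identity; tracking the factors of $\tfrac1p$ from the not-multiple edges against the $c_j$ cone-point terms is where a sign or counting slip would hide. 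Everything else is a mechanical substitution into Theorem~\ref{thm:MainDiaz1} once Lemma~\ref{lem:p-gonal} has identified which subgroups are trivial.
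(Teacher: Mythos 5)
Your proposal is correct and follows essentially the same route as the paper's own proof: Proposition \ref{prop:GenusO=0} together with Lemma \ref{lem:p-gonal} for the identifications in (1)--(2), and direct substitution into Theorem \ref{thm:MainDiaz1}(4)--(5) for the degrees and weights in (3)--(4). One small slip to fix: your displayed formula $\chi(\O_j)=2-c_j-(m_j+n_j)-\sum\bigl(1-\tfrac1p\bigr)$ carries a spurious $-c_j$; the correct value, consistent with the convention you yourself state in the last paragraph, is $\chi(\O_j)=2-(m_j+n_j)-c_j\bigl(1-\tfrac1p\bigr)$, and only with this does the substitution into $w^j=1-\tfrac12\bigl(p\,\chi(\O_j)+D_j\bigr)$ produce the claimed $w^j=\tfrac{p-1}{2}(n_j+c_j-2)$.
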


We remark that the weight $w^j$ obtained in (iv) is non-negative by Lemma \ref{lem:p-gonal}(iii).

\begin{proof}
Since $\O $ has genus 0, the dual graph $\DArriba $ is quotient of  the union of $p$ copies of $\DAbajo$, by Proposition \ref{prop:GenusO=0}. The precise identifications given by $\sim$ come from that proposition  and from Lemma \ref{lem:p-gonal}. Thus we have (1) and (2).

(3)   If $V=\O_j$ is multiple of $p$, then $\O_j$ does not contain any cone point and all the edges incident to $V$ are multiple of $p$, by Lemma \ref{lem:p-gonal}. Thus,   $|\Im\Phi_j|=1$ and $|\Im\Phi_{\g}|=1$  for all $\g$ in the boundary of $\O_j$. Then  Theorem \ref{thm:MainDiaz1}(4) gives that $D_j$ is equal to the degree of $ \O_j$,   and item (5) of that theorem 
says that 
$$w^j=1-\frac{1}{2}(\chi(\O_j)+{\rm deg}(\O_j))= 0
$$
because $\O_j$ has genus 0 and no cone points, so $\chi(\O_j)+{\rm deg}(\O_j)=2$.

(4) If $\O_j$ is not multiple of $p$, then $|\Im\Phi_j|=p$. Denote by $\mathcal M_j$ the set of (multiple of $p$)-edges adjacent to $\O_j$ and by $\mathcal N_j$ the set of (non-multiple of $p$)-edges adjacent to $\O_j$. Then, 
by Theorem \ref{thm:MainDiaz1}(4), we have 
$$
D_j= |\Im \Phi_j| \left( \sum_{\g\in \S_j \cap  \S^2 }\frac{1}{|\Im\Phi_{\g}|}\right)=p(\sum_{\g\in \mathcal M_j} \frac{1}{1}+ \sum_{\g\in\mathcal N_j}\frac{1}{p} )=p|\mathcal M_j|+|\mathcal N_j|=p\,m_j+n_j
$$
For the weight, since  $\O_j$ has genus 0, $c_j$ cone points of order $p$ and   degree  $m_j+n_j$, its Euler characteristic is  $
\chi(\O_j)= 2- n_j-m_j-c_j\left(1-\frac{1}{p} \right).
$ Thus, by 
Theorem \ref{thm:MainDiaz1}(5), we have
$$
w^j=1-\frac{1}{2}\left( p \Big(2- n_j-m_j-c_j \frac{p-1}{p}  \Big) +  p\,m_j+n_j \right) = \frac{p-1}{2} (n_j+c_j-2)
$$
\end{proof}

\subsection{Examples}  From the lists of topological actions in genus up to 4, 
we extract the {\it non-rigid} cyclic  $p$-gonal actions, that is, those cyclic $p$-gonal actions whose equisymmetric strata   is not a single point. For each of them, we find all the topological strata at the boundary. 

\subsubsection{Non-rigid $p$-gonal  actions in genus $g=2$}
  According to \cite{Broughton-Classifying}, on a surface of genus 2 there is one cyclic 3-gonal action and one cyclic 5-gonal action, which is rigid, and no other $p$-gonal actions.  The 3-gonal action is given by the epimorphism $\Phi\colon H_1(\O)\to \Z_3$ such that  $\O$ has signature $(0;3,3,3,3)$ and $\Phi$ has generating vector  $(a,a,a^{2},a^{2})$. An admissible multicurve on  an  orbifold of genus 0 and with  4 cone points consists just on one curve separating two cone points from the other two.  Taking into account the exponents of generating vector, we have two cases for $\S$, shown in  figure \ref{fig:3-gonal-g=2}. All the suborbifolds involved contain cone points, so none of them is  multiple of 3. Thus each vertex of $\DAbajo$ lifts to a vertex in $\DArriba$.  In the first case, the sum of exponents in a disc bounded by $\g$ is not multiple of 3, thus $\g$ is not multiple of 3 so it lifts to a single edge in $\DArriba$. In the second case $\g$ is multiple of 3 and so lifts to three edges. 

\psfrag{a}{\tiny{\textcolor{red}{$1$}}}
\psfrag{a1}{\tiny \textcolor{red}{$  2$}}
\psfrag{2}{\tiny \textcolor{red}{$\!\!\!11$}}
\psfrag{0}{\tiny \textcolor{red}{$\!\!\!12$}}
\psfrag{-2}{\tiny \textcolor{red}{$\!\!\!22$}}
\psfrag{z}{\tiny \textcolor{blue}{$0$}}
\psfrag{1}{\tiny \textcolor{blue}{$1$}}
\psfrag{G}{\tiny \textcolor{blue}{$G$}}
\psfrag{id}{\tiny \textcolor{blue}{$\{id\}$}}
\psfrag{DAb}{\tiny \textcolor{blue}{$\DAbajo$}}
\psfrag{DAr}{\tiny \textcolor{blue}{$\DArriba$}}
\hspace{1truecm}

\begin{figure}
\includegraphics[height=3.5cm,width=7cm]{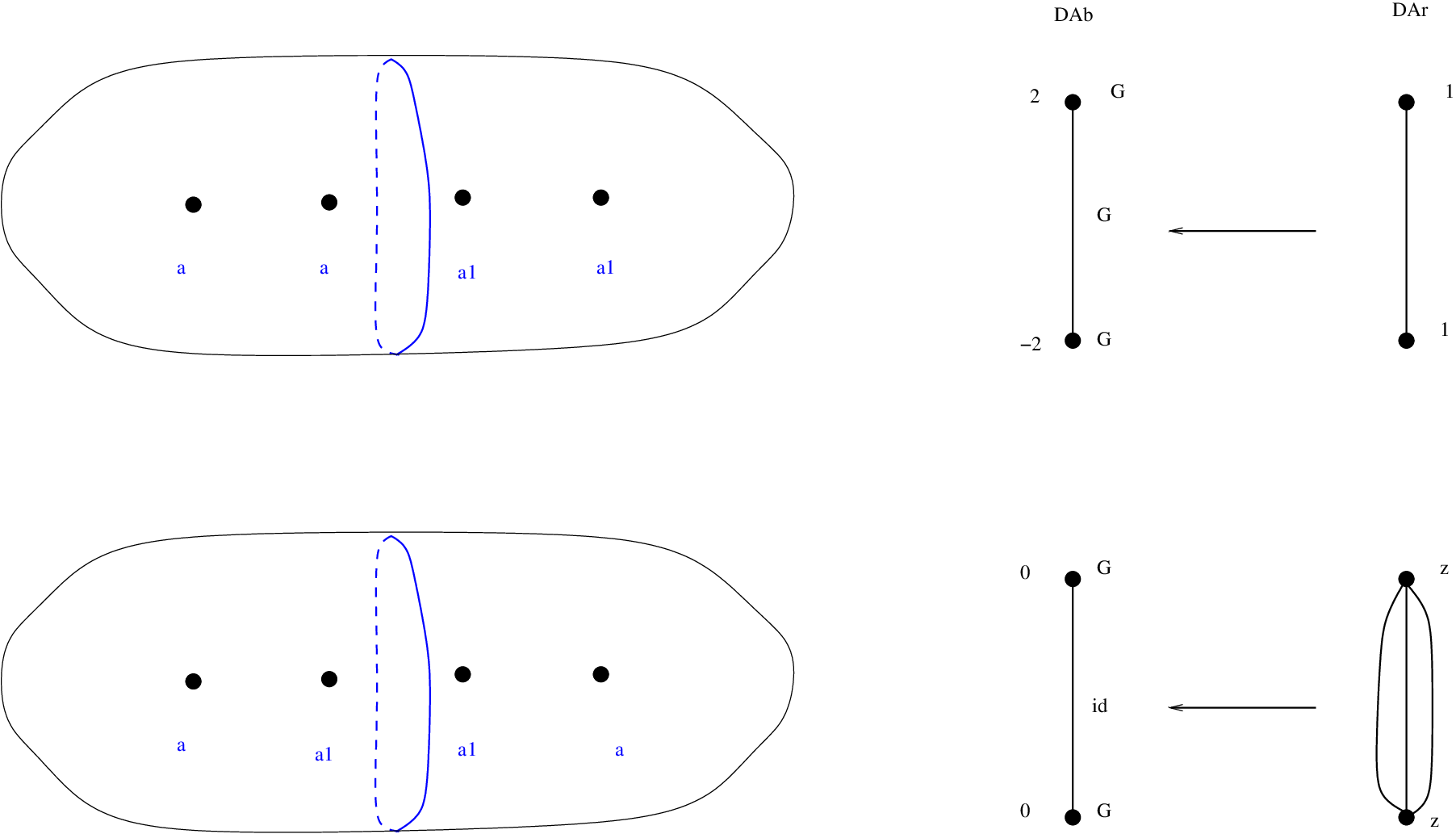}
\caption{Cyclic 3-gonal action in genus 2.  
The red numbers, both in $\O$ and in  $\DAbajo$, indicate exponents of the generating vector. The subgroups $\Im\Phi_j$ and $\Im\Phi_{\g}$ are  also written at $\DAbajo$. } 
\label{fig:3-gonal-g=2}
\end{figure}

\subsubsection{Non-rigid $p$-gonal  actions in genus $g=3$} There is only one non-rigid $p$-gonal action, for $p=3$, signature of $\O$ equal to $(0;3^5)$ and with generating vector $(a,a,a,a,a^{2})$. 
 Now a multicurve may have 1 or 2 components. Figure 
  \ref{fig:p-gonal-g=3} shows the four possible  situations.  We conclude that the boundary of the 3-gonal  equisymmetric  locus in $\M_3$ intersects 4 different topological strata. 
 
 \psfrag{a}{\tiny{\textcolor{red}{$1$}}}
\psfrag{a1}{\tiny \textcolor{red}{$2$}}
\psfrag{2}{\tiny \textcolor{red}{$\!\!\!11$}}
\psfrag{3}{\tiny \textcolor{red}{$\!\!\!\!111$}}
\psfrag{0}{\tiny \textcolor{red}{$\!\!\!12$}}
\psfrag{-2}{\tiny \textcolor{red}{$1$}}
\psfrag{-1}{\tiny \textcolor{red}{$2$}}
\psfrag{z}{\tiny \textcolor{blue}{$0$}}
\psfrag{1b}{\tiny \textcolor{blue}{$1$}}
\psfrag{2b}{\tiny \textcolor{blue}{$2$}}
\psfrag{1}{\tiny \textcolor{red}{$\!\!\!\!112$}}
\psfrag{G}{\tiny \textcolor{blue}{$G$}}
\psfrag{id}{\tiny \textcolor{blue}{$id$}}
\psfrag{DAb}{\tiny \textcolor{blue}{$\DAbajo$}}
\psfrag{DAr}{\tiny \textcolor{blue}{$\DArriba$}}
\hspace{1.5truecm}
\begin{figure}
\includegraphics[height=8cm,width=8cm]{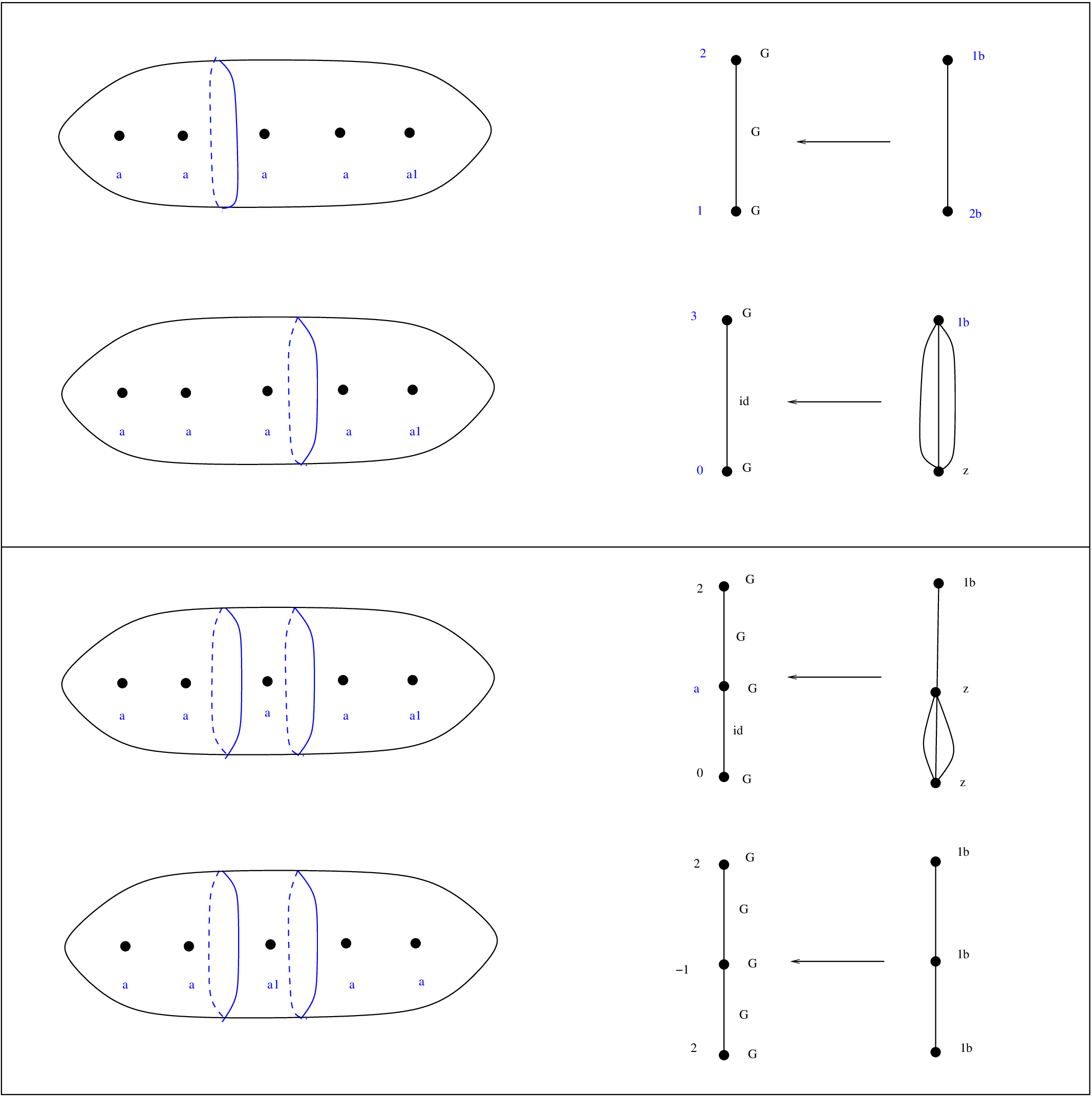}
\caption{Cyclic 3-gonal action in genus 3.  The boundary of the  equisymmetric locus intersects 4 topological strata. }
\label{fig:p-gonal-g=3}
\end{figure}

 \subsubsection{Non-rigid $p$-gonal  actions in genus $g=4$} Using  Kimura notation, there are the following non-rigid $p$-gonal actions in genus $g=4$.
 \begin{itemize}
\item $\Phi_7\colon H_1(0; 3^6)\to \Z_3$,\quad \quad  $(a,a,a,a,a,a)$

\item $\Phi_8\colon H_1(0; 3^6)\to \Z_3$,\quad  \quad $(a,a,a,a^2,a^2,a^2)$

\item $\Phi_{12}\colon H_1(0; 5,5,5,5)\to \Z_5$, \quad $(a,a,a,a^2)$

\item $\Phi_{13}\colon H_1(0; 5,5,5,5)\to \Z_5$, \quad $(a,a,a^4,a^4)$

\item $\Phi_{14}\colon H_1(0; 5,5,5,5)\to \Z_5$, \quad $(a,a^2,a^3,a^4)$

\end{itemize}

Table \ref{Tabla:Phi_7} shows all possible topological strata at the boundary of the equisymmetric locus determined by the action $\Phi_7$. Tables  \ref{Tabla:Phi_8(I)} and  \ref{Tabla:Phi_8(II)} do the same for  the equisymmetric locus determined by $\Phi_8$. 
In both actions the orbifold $\O$ is a sphere with 6 cone points. There are four possible admissible multicurves $\S$, two of them pants decompositions (maximal number of components). Pants decompositions   give rise to minimal strata in the boundary of the equisymmetric space, in the sense that their closure do not contain any other strata.  
 Table \ref{Tabla:Phi_8(I)} shows all the non-minimal topological strata and Table \ref{Tabla:Phi_8(II)} shows the minimal strata.  Finally, Table \ref{Tabla:Phi12Phi13Phi14} shows the same for the actions $\Phi_{12}$, $\Phi_{13}$ and $\Phi_{14}$. 

In all these tables, the graphs $\DAbajo$ are shown at the left   and the graphs $\DArriba$ are shown at the rightmost column. The numbers assigned to the vertices of $\DAbajo$ indicate the exponents of the generating vector.

\begin{table}
\begin{center}
\begin{tabular}{|c|ccc|}
\hline 
\multirow{6}{*}{ \includegraphics[scale=0.2]{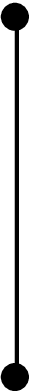}}  
& 
\multirow{3}{*}{ \includegraphics[scale=0.15]{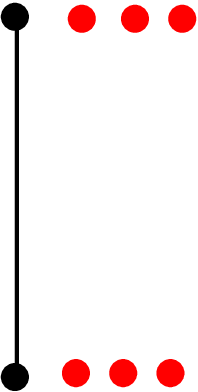}} 
 &   & 
 \multirow{3}{*}{ 
 \psfrag{a}{\tiny 1}
 \psfrag{b}{\tiny 1}
 \includegraphics[scale=0.15]{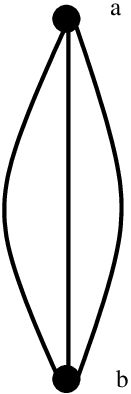}
 }  \\ 
  &   & \hspace{0.3cm} $\longleftarrow$ 
    &  \\ 
  &   &   &   \\ 
\cline{2-4}
  & \multirow{3}{*}{ \includegraphics[scale=0.15]{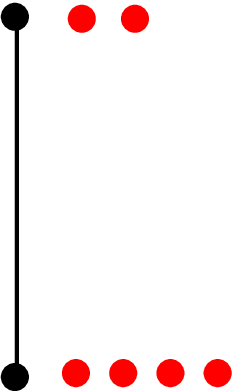}} 
  &   & \multirow{3}{*}{ 
  \psfrag{a}{\vspace{0.1cm}\tiny 1}
  \psfrag{b}{\tiny 3}
    \includegraphics[scale=0.15]{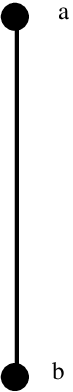}  
  } \\ 
  & & \hspace{0.3cm} $\longleftarrow$ &   \\ 
  &   &   &   \\ 
\hline 
\multirow{10}{*}{ \includegraphics[scale=0.2]{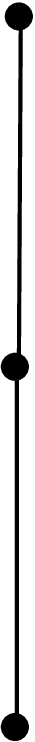}}  & 
\multirow{5}{*}{ \includegraphics[scale=0.15]{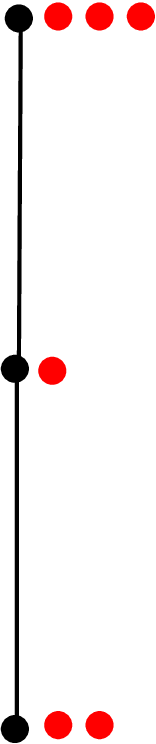}} 
 &   & \multirow{5}{*}{
 \psfrag{a}{\tiny 1}
 \psfrag{b}{\tiny 0}
 \psfrag{c}{\tiny 1}
  \includegraphics[scale=0.15]{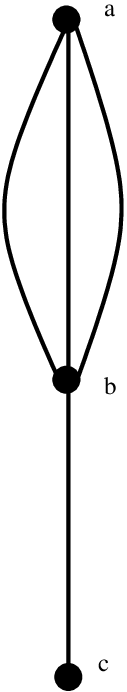}
  }  \\ 
&&& \\
  &   & \hspace{0.3cm}  $\longleftarrow$  &  \\ 
  & &  & \\
  &   &   &   \\ 
\cline{2-4}
  & \multirow{5}{*}{ \includegraphics[scale=0.15]{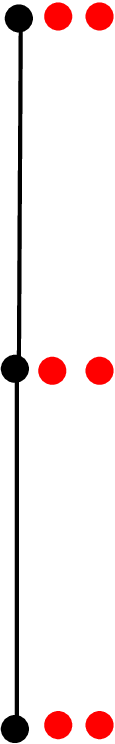}} 
  &   & \multirow{5}{*}{  
  \psfrag{a}{\tiny 1}
  \psfrag{b}{\tiny 2}
  \psfrag{c}{\tiny 1}
  \includegraphics[scale=0.15]{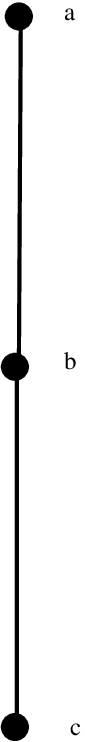}}
  \\ 
 &&& \\
 
  & & $\longleftarrow$ &   \\ 
 &&&\\
  &   &   &   \\ 
\hline 
\end{tabular} 
   \hspace{1cm}
\begin{tabular}{|c|ccc|}
\hline 
\multirow{7}{*}{
 \includegraphics[scale=0.15]{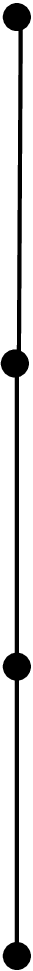}
 }  & \multirow{7}{*}{ \includegraphics[scale=0.15]{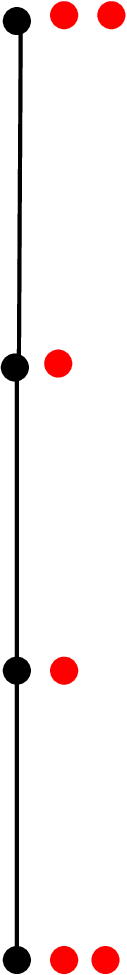}} 
 &   &  
 \psfrag{a}{\tiny 1} 
 \psfrag{b}{\tiny 0} 
 \psfrag{c}{\tiny 0} 
 \psfrag{d}{\tiny 1} 
  \multirow{7}{*}{ \includegraphics[scale=0.14]{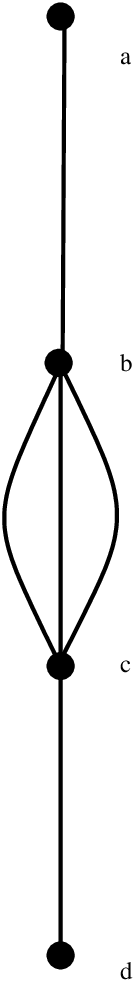}} \\ 
 &   &   &   \\
 &   &   &   \\
&& \hspace{0.3cm} $\longleftarrow$ \hspace{0.3cm} & \\
  &   &   &   \\ 
  &&& \\
\hline 
\multirow{5}{*}{ \includegraphics[scale=0.2]{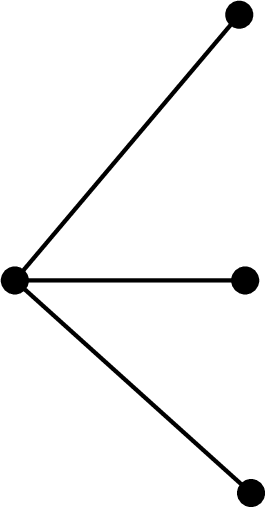}} 
 & \multirow{5}{*}{ \includegraphics[scale=0.2]{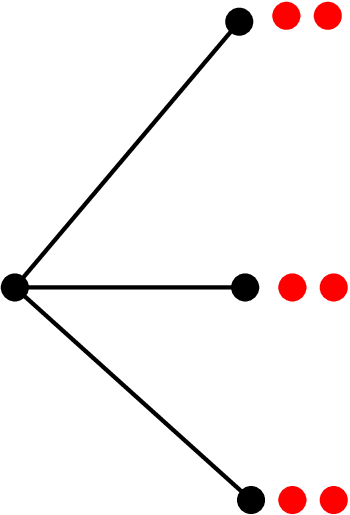}} 
 &   &  
  \psfrag{a}{\tiny 1} 
 \psfrag{b}{\tiny 1} 
 \psfrag{c}{\tiny 1} 
 \psfrag{d}{\tiny 1}  
 \multirow{5}{*}{ \includegraphics[scale=0.2]{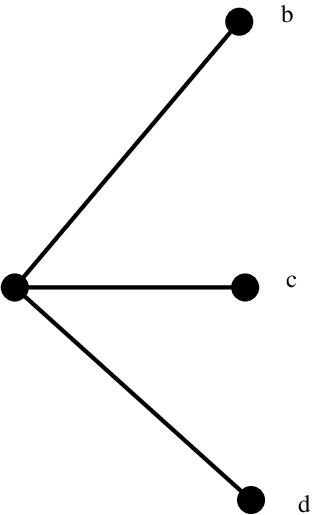}}
 \\ 
  &   &   &   \\ 
&& \hspace{0.3cm} $\longleftarrow$ \hspace{0.3cm} & \\
&&& \\
&&& \\
\hline 
\end{tabular} 
\end{center}
\caption{\small  Action $\Phi_7$ on genus $g=4$: topological strata at the boundary of the corresponding  equisymmetric locus. The group acting is   $\Z_3$ and the orbifold $\O$ is a sphere with 6 cone points of order 3. 
 The first columns show the trees $\DAbajo$, dual to all the admissible multicurves $\S$   in $\O$. 
 The left parts of the second columns show    the same trees with all possible distribution of the 6 cone points assigned to the vertices; notice that a terminal vertex   must have at least two cone points assigned, while  a degree 2 vertex   must have at least one. Since all the exponents of the generating vector are 1, no numbers are assigned to the vertices of $\DAbajo$.  
Finally,   the right parts show the graphs $\DArriba$, dual to  the preimages $\tilde \S$ of the  multicurves $\S$. The numbers are the genera of the corresponding vertex.}
\label{Tabla:Phi_7}
\end{table}

\begin{table}[ht]
 \begin{spacing}{5}
\begin{tabular}{ | c|c|m{1cm}||m{1cm}|c|c|m{0.5cm}|m{1cm}||m{1cm}|c|m{0.5cm}|m{0.5cm}|m{1cm}||m{1cm}|}
\cline{1-4}
\cline{6-9}
\cline{11-14}
  \includegraphics[scale=0.2]{arbol-1.eps}
&
  \includegraphics[scale=0.2]{arbol-1-1.eps}
&
\psfrag{a}{\tiny 111}  
\psfrag{b}{\tiny 222} 
  \includegraphics[scale=0.15]{arbol-1-a.eps}
&
\psfrag{a}{\tiny 1 }  
\psfrag{b}{\tiny 1} 
  \includegraphics[scale=0.15]{grafo-1-1-a.eps}
&
 
&
  \includegraphics[scale=0.2]{arbol-2.eps}
&
  \includegraphics[scale=0.2]{arbol-2-1.eps}
&
\psfrag{a}{\tiny 111}  
\psfrag{b}{\tiny 2 } 
\psfrag{c}{\tiny 22 } 
  \includegraphics[scale=0.15]{arbol-2-a.eps}
&
\psfrag{a}{\tiny 1  }  
\psfrag{b}{\tiny 0}
\psfrag{c}{\tiny 1 }  
  \includegraphics[scale=0.15]{grafo-2-1-a.eps}
  &
  &
 \includegraphics[scale=0.2]{arbol-2.eps}
&
  \includegraphics[scale=0.2]{arbol-2-2.eps}
&
\psfrag{a}{\tiny 11}  
\psfrag{b}{\tiny 12 } 
\psfrag{c}{\tiny 22 } 
  \includegraphics[scale=0.12]{arbol-2-a.eps}
&
\psfrag{a}{\tiny 1  }  
\psfrag{b}{\tiny 2}
\psfrag{c}{\tiny 1 }  
  \includegraphics[scale=0.12]{arbol-2-a.eps}
\\
\cline{3-4}
\cline{8-9}
\cline{13-14}
&&
\psfrag{a}{\tiny 112 }  
\psfrag{b}{\tiny 122} 
  \includegraphics[scale=0.15]{arbol-1-a.eps}
&
\psfrag{a}{\tiny 2 }  
\psfrag{b}{\tiny 2} 
  \includegraphics[scale=0.15]{arbol-1-a.eps}
 &
 &
 &
 &
 \psfrag{a}{\tiny 112}  
\psfrag{b}{\tiny 1  } 
\psfrag{c}{\tiny 22 } 
  \includegraphics[scale=0.12]{arbol-2-a.eps}
&
\psfrag{a}{\tiny 2  }  
\psfrag{b}{\tiny 1}
\psfrag{c}{\tiny 1 }  
  \includegraphics[scale=0.12]{arbol-2-a.eps}
  &
  &
  &
  &
  \psfrag{a}{\tiny 11}  
\psfrag{b}{\tiny 22 } 
\psfrag{c}{\tiny 12 } 
  \includegraphics[scale=0.12]{arbol-2-a.eps}
&
\psfrag{a}{\tiny 1  }  
\psfrag{b}{\tiny 1}
\psfrag{c}{\tiny 0 }  
  \includegraphics[scale=0.12]{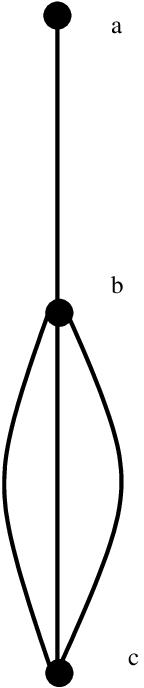}
\\
\cline{2-4}
\cline{8-9}
\cline{13-14}
&
 \includegraphics[scale=0.2]{arbol-1-2.eps}
&
\psfrag{a}{\tiny 11 }  
\psfrag{b}{\tiny 1222} 
  \includegraphics[scale=0.15]{arbol-1-a.eps}
&
\psfrag{a}{\tiny 1 }  
\psfrag{b}{\tiny 3} 
  \includegraphics[scale=0.15]{arbol-1-a.eps}
  &
  &&&
  \psfrag{a}{\tiny 112}  
\psfrag{b}{\tiny 2 } 
\psfrag{c}{\tiny 12 } 
  \includegraphics[scale=0.12]{arbol-2-a.eps}
&
\psfrag{a}{\tiny 2  }  
\psfrag{b}{\tiny 0}
\psfrag{c}{\tiny 0 }  
  \includegraphics[scale=0.12]{grafo-2-1-Reves-a.eps}
  &
  &&&
  \psfrag{a}{\tiny 12}  
\psfrag{b}{\tiny 11} 
\psfrag{c}{\tiny 22 } 
  \includegraphics[scale=0.12]{arbol-2-a.eps}
&
\psfrag{a}{\tiny 0  }  
\psfrag{b}{\tiny 1}
\psfrag{c}{\tiny 1 }  
  \includegraphics[scale=0.12]{grafo-2-1-a.eps}
\\
\cline{3-4}
\cline{8-9}
\cline{13-14}
&&
\psfrag{a}{\tiny 12 }  
\psfrag{b}{\tiny 1222} 
  \includegraphics[scale=0.15]{arbol-1-a.eps}
&
\psfrag{a}{\tiny 0}  
\psfrag{b}{\tiny 2} 
  \includegraphics[scale=0.15]{grafo-1-1-a.eps}
  &
  &&&
  \psfrag{a}{\tiny 122}  
\psfrag{b}{\tiny 1 } 
\psfrag{c}{\tiny 12 } 
  \includegraphics[scale=0.12]{arbol-2-a.eps}
&
\psfrag{a}{\tiny 2  }  
\psfrag{b}{\tiny 0}
\psfrag{c}{\tiny 0 }  
  \includegraphics[scale=0.12]{grafo-2-1-Reves-a.eps}
  &
  &&&
  \psfrag{a}{\tiny 12}  
\psfrag{b}{\tiny 12} 
\psfrag{c}{\tiny 12 } 
  \includegraphics[scale=0.12]{arbol-2-a.eps}
&
\psfrag{a}{\tiny 0  }  
\psfrag{b}{\tiny 0}
\psfrag{c}{\tiny 0 }  
  \includegraphics[scale=0.12]{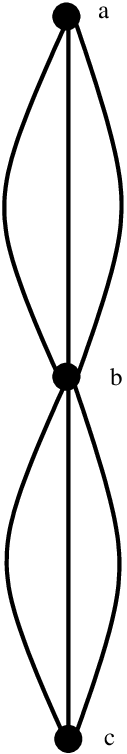}
\\
\cline{3-4}
\cline{8-9}
\cline{11-14}
&&
\psfrag{a}{\tiny 22 }  
\psfrag{b}{\tiny 1112} 
\includegraphics[scale=0.15]{arbol-1-a.eps}
&
\psfrag{a}{\tiny 1 }  
\psfrag{b}{\tiny 3} 
  \includegraphics[scale=0.15]{arbol-1-a.eps}
  &
  &&&
    \psfrag{a}{\tiny 122}  
\psfrag{b}{\tiny 2} 
\psfrag{c}{\tiny 11 } 
  \includegraphics[scale=0.12]{arbol-2-a.eps}
  &
    \psfrag{a}{\tiny 2}  
\psfrag{b}{\tiny 1} 
\psfrag{c}{\tiny 1 } 
  \includegraphics[scale=0.12]{arbol-2-a.eps}
  &
  \multicolumn{3}{c}{  }  
\\
  \cline{1-4}
\cline{8-9}
 
   \multicolumn{4}{c}{  }
  & &&&
     \psfrag{a}{\tiny 222}  
\psfrag{b}{\tiny 1} 
\psfrag{c}{\tiny 11 } 
  \includegraphics[scale=0.12]{arbol-2-a.eps}
  &
    \psfrag{a}{\tiny 1}  
\psfrag{b}{\tiny 0} 
\psfrag{c}{\tiny 1 } 
  \includegraphics[scale=0.12]{grafo-2-1-a.eps}
 &
  \multicolumn{3}{c}{  }
 \\
\cline{6-9}
\end{tabular}
 \end{spacing}
\vspace{-1cm}
\caption{\small Action $\Phi_8$ on genus $g=4$,  (I). The group acting is   $\Z_3$ and the orbifold $\O$ is a sphere with 6 cone points of order 3.  The two first columns (of each subtable) are as in Table \ref{Tabla:Phi_7}. 
  The third column shows the trees $\DAbajo$ with the distribution of the exponents of the generating vector. Finally, the last column shows the corresponding weighted graph $\DArriba$.
   }
\label{Tabla:Phi_8(I)}
\end{table}
\begin{table}
\begin{spacing}{4}
 \begin{tabular}{|c|p{1cm}||p{1cm}|c|c|p{2cm}||p{2cm}|}
 \cline{1-3}
 \cline{5-7}
 \multirow{4}{1cm}{
\includegraphics[scale=0.14]{arbol-3-1.eps} 
 }
 & 
 \psfrag{a}{\tiny{11}}
 \psfrag{b}{\tiny{1}}
 \psfrag{c}{\tiny{2}}
 \psfrag{d}{\tiny{22}}
 \includegraphics[scale=0.14]{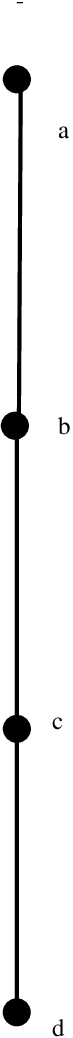}
  & 
 \psfrag{a}{\tiny{1 }}
 \psfrag{b}{\tiny{0}}
 \psfrag{c}{\tiny{0}}
 \psfrag{d}{\tiny{1}}
 \includegraphics[scale=0.14]{grafo-3-2-a.eps}  
  &   &
  \multirow{2}{1cm}{
   \psfrag{a}{\tiny{0}}
 \psfrag{b}{\tiny{0}}
 \psfrag{c}{\tiny{1}}
 \psfrag{d}{\tiny{2}}
\includegraphics[scale=0.14]{arbol-4-1.eps} 
 } & 
  
 \psfrag{b}{\tiny{11}}
 \psfrag{c}{\tiny{12}}
 \psfrag{d}{\tiny{22}}
 \includegraphics[scale=0.14]{arbol-4-a.eps}
  & 
  \psfrag{a}{\tiny{0}}
 \psfrag{b}{\tiny{1}}
 \psfrag{c}{\tiny{0}}
 \psfrag{d}{\tiny{1}}
\includegraphics[scale=0.2]{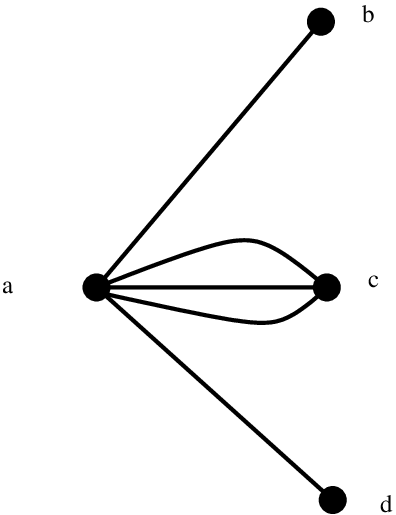} 
   \\ 
 \cline{2-3}
 \cline{6-7} 
   & 
 \psfrag{a}{\tiny{11}}
 \psfrag{b}{\tiny{2}}
 \psfrag{c}{\tiny{1 }}
 \psfrag{d}{\tiny{22}}   
   \includegraphics[scale=0.14]{arbol-3-a.eps} & 
 \psfrag{a}{\tiny{1 }}
 \psfrag{b}{\tiny{1}}
 \psfrag{c}{\tiny{1}}
 \psfrag{d}{\tiny{1}}
 \includegraphics[scale=0.14]{arbol-3-a.eps}   
    &   &  &
 
 \psfrag{b}{\tiny{12}}
 \psfrag{c}{\tiny{12}}
 \psfrag{d}{\tiny{12}}
\includegraphics[scale=0.14]{arbol-4-a.eps}    
      & 
  \psfrag{a}{\tiny{0}}
 \psfrag{b}{\tiny{0}}
 \psfrag{c}{\tiny{0}}
 \psfrag{d}{\tiny{0}}      
\includegraphics[scale=0.2]{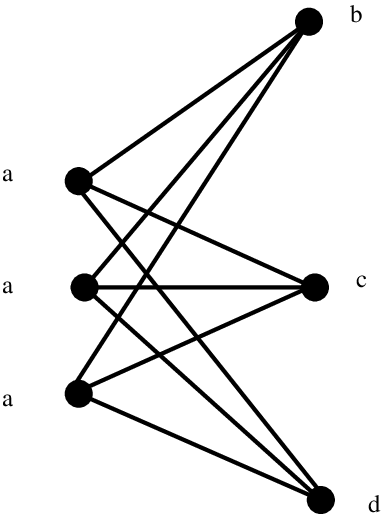}
     \\ 
 \cline{2-3}
 \cline{5-7}
  & 
   \psfrag{a}{\tiny{11}}
 \psfrag{b}{\tiny{2}}
 \psfrag{c}{\tiny{2}}
 \psfrag{d}{\tiny{12}}
\includegraphics[scale=0.14]{arbol-3-a.eps}  
   &
 \psfrag{a}{\tiny{1 }}
 \psfrag{b}{\tiny{1}}
 \psfrag{c}{\tiny{0}}
 \psfrag{d}{\tiny{0}}
 \includegraphics[scale=0.14]{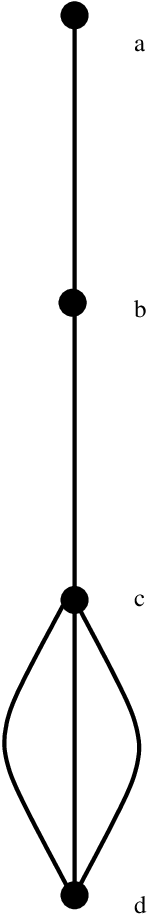}   
   & 
\multicolumn{3}{c}{ }   
    \\ 
    
\cline{2-3}
 
   & 
 \psfrag{a}{\tiny{12}}
 \psfrag{b}{\tiny{1}}
 \psfrag{c}{\tiny{2}}
 \psfrag{d}{\tiny{12}}
   \includegraphics[scale=0.14]{arbol-3-a.eps}
    & 
 \psfrag{a}{\tiny{0}}
 \psfrag{b}{\tiny{0}}
 \psfrag{c}{\tiny{0}}
 \psfrag{d}{\tiny{0}}
 \includegraphics[scale=0.14]{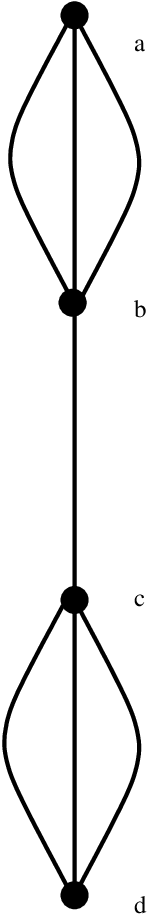}    
     & \multicolumn{3}{c}{} 
     \\ 
 \cline{1-3}
 \end{tabular} 
 \end{spacing}
 \vspace{-1cm}
\caption{Action $\Phi_8$ on genus $g=4$,  (II). Minimal strata at the boundary of the corresponding equisymmetric locus. 
 The last graph is the complete bipartite graph $K_{3,3}$.  }
\label{Tabla:Phi_8(II)}
\end{table}


\begin{table}
\begin{tabular}{|m{1cm}||m{1cm}|}
\hline 
\multicolumn{2}{|c|}{$\Phi_{12}$}\\ 
\hline 
\psfrag{a}{\tiny 11 }  
\psfrag{b}{\tiny 12 } 
\vspace{2mm}  
  \includegraphics[scale=0.15]{arbol-1-a.eps}
& 
\psfrag{a}{\tiny 2 }  
\psfrag{b}{\tiny 2} 
\vspace{2mm}  
  \includegraphics[scale=0.15]{arbol-1-a.eps} \\ 
\hline 
 \end{tabular}
 \hspace{0.5cm}
 %
 %
\begin{tabular}{|m{1cm}||m{1cm}|}
 \hline 
\multicolumn{2}{|c|}{$\Phi_{13}$} \\ 
 \hline 
 \psfrag{a}{\tiny 11 }  
\psfrag{b}{\tiny 44}
\vspace{2mm}  
  \includegraphics[scale=0.15]{arbol-1-a.eps}
 & 
 \psfrag{a}{\tiny  2 }  
\psfrag{b}{\tiny  2} 
\vspace{2mm} 
  \includegraphics[scale=0.15]{arbol-1-a.eps}
  \\ 
 \hline 
\psfrag{a}{\tiny 14 }  
\psfrag{b}{\tiny 14}
\vspace{2mm}  
  \includegraphics[scale=0.15]{arbol-1-a.eps}
& 
 \psfrag{a}{\tiny  0 }  
\psfrag{b}{\tiny  0} 
\vspace{2mm} 
  \includegraphics[scale=0.15]{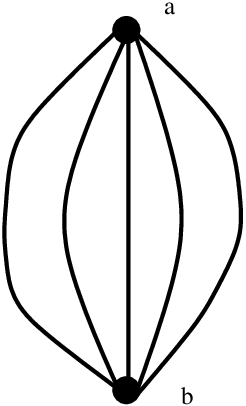}\\ 
 \hline 
\end{tabular}
\hspace{0.5cm}
%
%
\begin{tabular}{|m{1cm}||m{1cm}|}
\hline 
\multicolumn{2}{|c|}{$\Phi_{14}$} \\ 
\hline 
 \psfrag{a}{\tiny  12 }  
\psfrag{b}{\tiny  34} 
\vspace{3mm} 
  \includegraphics[scale=0.15]{arbol-1-a.eps}
& 
 \psfrag{a}{\tiny  2 }  
\psfrag{b}{\tiny  2} 
\vspace{3mm} 
  \includegraphics[scale=0.15]{arbol-1-a.eps}
   \\ 
   \hline 
  \psfrag{a}{\tiny  13 }  
\psfrag{b}{\tiny  24} 
\vspace{3mm} 
  \includegraphics[scale=0.15]{arbol-1-a.eps}
 &
 \psfrag{a}{\tiny  2 }  
\psfrag{b}{\tiny  2} 
\vspace{3mm} 
  \includegraphics[scale=0.15]{arbol-1-a.eps} 
  \\ 
\hline 
 \psfrag{a}{\tiny  14 }  
\psfrag{b}{\tiny  23}
\vspace{3mm}  
  \includegraphics[scale=0.15]{arbol-1-a.eps}
 & 
  \psfrag{a}{\tiny  0 }  
\psfrag{b}{\tiny  0} 
\vspace{3mm} 
 \includegraphics[scale=0.15]{grafo-1-5-a.eps}
 \\ 
\hline 
\end{tabular} 
\vspace{0.1cm}
\caption{Topological strata at the boundary of the equisymmetric loci determined by the actions $\Phi_{12}$, $\Phi_{13}$ and $\Phi_{14}$. }
\label{Tabla:Phi12Phi13Phi14}
\end{table}

In general, to find  all the topological strata at the boundary of the  equisymmetric locus $\mathcal M(\Z_p,\O,\Phi)$ determined by a cyclic $p$-gonal action, 
 one can proceed in the following way.
 \begin{itemize}
 \item[(1)] Find all the trees $\mathcal T$ with at most $r-3$ edges, where $r$ is the number of cone points of $\O$ (first column in Table \ref{Tabla:Phi_8(I)}).
 \item[(2)] Distribute $r$ points among the vertices of the tree $\mathcal T$, taking into account that for each vertex, its degree plus  the number of cone points assigned to it must be at least 3 (second column in Table \ref{Tabla:Phi_8(I)}).
 \item[(3)] Distribute the $r$ exponents of the generating vector among the $r$ points assigned to the vertices in step (2) (third column in Table \ref{Tabla:Phi_8(I)}). After these three steps  we have  determined  the   graphs $\DAbajo$ dual to all the possible admissible   multicurves $\S$ on $\O$.
 
 \item[(4)] Use Theorem \ref{thm:p-gonal} to find $\DArriba$ (fourth column in Table \ref{Tabla:Phi_8(I)}). 
 \end{itemize}

\subsubsection{Cyclotomic action.} This action is defined for any prime $p >3 $ as $\Phi\colon H_1(\O)\to \Z_p$ with generating vector $(a,a^2,a^3,\dots, a^{p-1})$. In particular, $\O_j$ has $p-1$ cone points $q_i$ and the covering surface has genus $g=\frac{(p-1)(p-3)}{2}$. 
For $p=5$, the cyclotomic action is the action $\Phi_{14}$ studied in the previous section.

Consider a multicurve $\S=\{\g_1,\dots, \g_r\}$ with $ \frac{p-1}{2}\leq r \leq p-4$ where the curve $\g_1$ surrounds the cone points $q_1,q_{p-1}$, $\g_2$ surrounds $q_2,q_{p-2}$, and so on until $\g_{\frac{p-1}{2}}$ surrounds $q_{\frac{p-1}{2}}, q_{\frac{p+1}{2}}$. Notice that $p-4$ is the maximal number of components of an admissible multicurve in $\O$.  Applying Lemma \ref{lem:p-gonal} and Theorem \ref{thm:p-gonal}, we see that  all the edges of $\DAbajo$ are multiple of $p$ and all the vertices   are  multiple of $p$ except the terminal ones (those with degree 1). The weights of all the vertices in $\DArriba$ are equal to 0. As a consequence, the graph $\DArriba$ is constructed by taking $p$ copies of $\DAbajo$ and identifying   corresponding terminal vertices. Figure \ref{fig:grafo-ciclotomico} shows an example of $\DAbajo$ and its corresponding $\DArriba$ for $p=7$.

\psfrag{DAbajo}{$\DAbajo$}
\psfrag{DArriba}{$\DArriba$}
\begin{figure}
\includegraphics[scale=0.3]{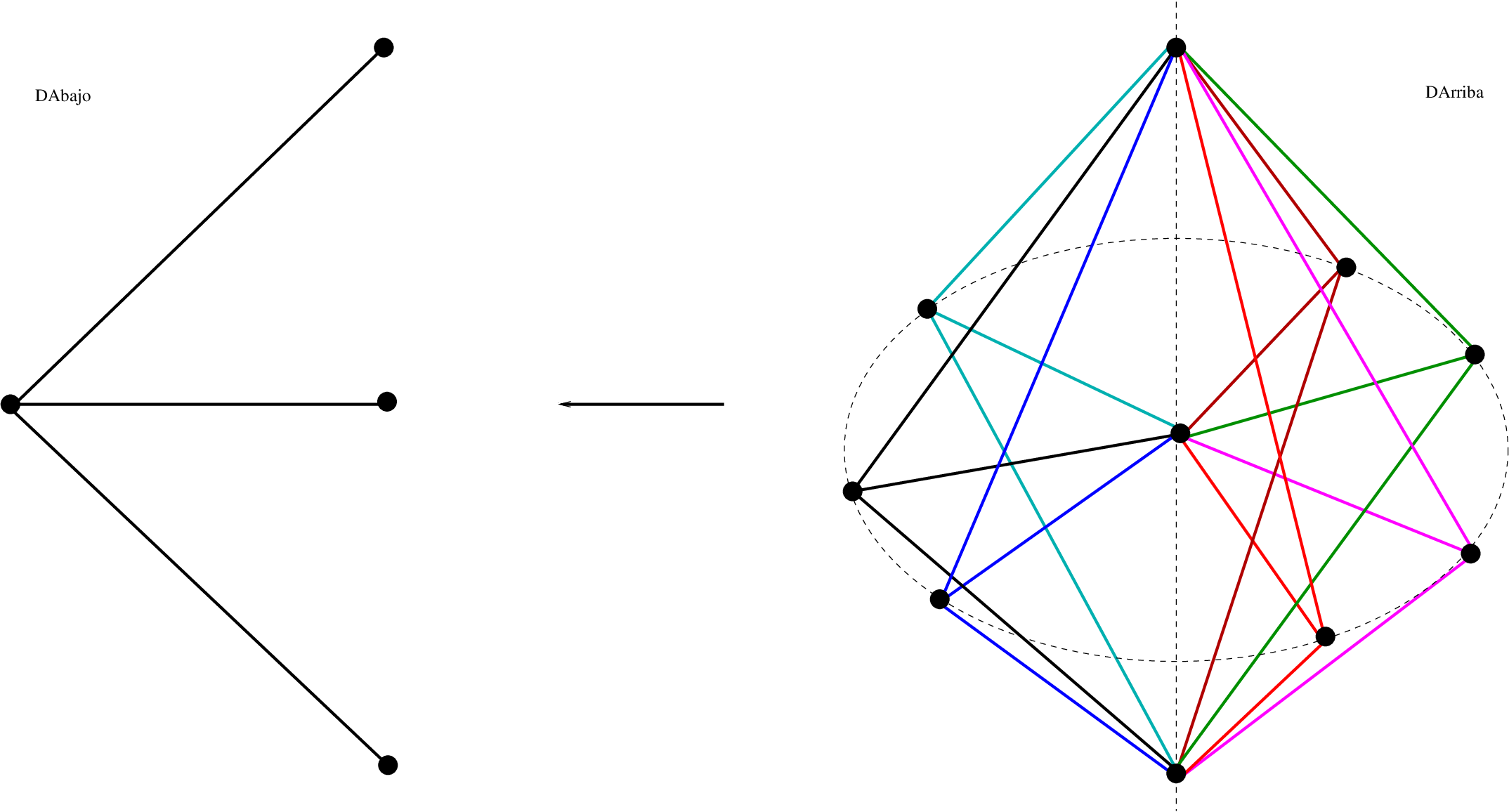}
\caption{Example of graphs $\DAbajo$ and $\DArriba$ for the cyclotomic action with $p=7$.}
\label{fig:grafo-ciclotomico}
\end{figure}

In this way, when $\S$ is a pants decomposition (i.e., when $r=p-4$), the multicurve $\tilde \S$ is an example of multicurve in $S$ equivariant under $G$ with the maximum number of connected components.

\section{Hyperelliptic locus}\label{sec:Hyperelliptic}
The hyperelliptic action is defined in the same way as the cyclic $p$-action but with $p=2$. That is, is an involution on a surface $S_{\!\!g},  $ of genus $g$ whose quotient orbifold $\O$  has genus  0, and therefore $2+2g$ cone points of order 2. It is well-known that if a Riemann  surface admits a hyperelliptic involution, this is unique, and that a Riemann surface of genus 2 always admits the hyperelliptic involution. Hence in this section $g\geq 3$. 

The hyperelliptic equisymmetric stratum is determined by the epimorphism $\Phi\colon H_1(\O)\to \Z_2$ defined as $\Phi(x_i)=a$ for all $i$, where, as in the $p$-gonal case, $x_i$ is a small loop around the cone point $q_i$.

 Let $\S\subset\O$ a multicurve. Because $\O$ is a sphere, its dual graph $\DAbajo$ is also  a tree, as in the $p$-gonal case, but now it can have semiedges, one for each arc of $\S$.  The weight of each vertex has the form $(0;2^k)$, which we will abbreviate as $k$, i.e., the weight of any vertex will be the number of cone points the corresponding orbifold  contains. The definitions of {\it even} vertex or edge is the analog to ``multiple of $p$" in the $p$-gonal case.

 We obtain a lemma totally analogous to Lemma \ref{lem:p-gonal} just by changing $p$ by $2$. The only new part appears for  the semiedges.

\begin{lem} \label{lem:hyperelliptic}
Let $\Phi\colon H_1(\O)\to \Z_2$ be the hyperelliptic   action and let $\S\subset \O$ be an admissible multicurve.
Then
\begin{itemize}
\item[(i)] An edge $\g$ of $\D_{\S}$ is even if
and only if $\g$ bounds a disc in $\O$ containing an even number of cone points.

\item[(ii)] A vertex    $\O_j$ of $\D_{\S}$ is even
 if and only if the suborbifold $\O_j$ does not contain any cone point and all the edges $\g$ incident to the vertex $\O_j$ are even.
 
 \item[(iii)] A semiedge is always odd. A semiedge incident to an odd vertex lifts to a loop. A semiedge incident to an even vertex lifts to an edge joining the two lifts of the vertex. 
\end{itemize}
\end{lem}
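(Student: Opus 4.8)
The plan is to mirror the proof of Lemma~\ref{lem:p-gonal} for parts (i) and (ii), specialized to $p=2$, and to concentrate the real work on the semiedge statement (iii). Recall that here ``even'' means that the pertinent image subgroup, $\Im\Phi_{\g}$ for an edge or $\Im\Phi_j$ for a vertex, is trivial; by items (1) and (2) of Theorem~\ref{thm:MainDiaz1} this is exactly the condition to have two preimages rather than one.

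For (i) I would copy the argument of Lemma~\ref{lem:p-gonal}(i): if the simple closed curve $\g$ bounds a disc containing the cone points $q_{i_1},\dots,q_{i_k}$, the generator of $H_1(\g)$ is $x_{i_1}\cdots x_{i_k}$, so $\Im\Phi_{\g}=\langle a^{k}\rangle$, which is trivial precisely when $k$ is even. For (ii), since $\O_j$ has genus $0$, its first homology is generated by the meridians about its interior cone points together with its boundary circles; each interior meridian maps to $a\neq \mathrm{id}$, so $\Im\Phi_j$ can be trivial only when $\O_j$ has no interior cone point, in which case triviality amounts to every boundary circle mapping to $\mathrm{id}$. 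The boundary circles coming from closed-curve edges reproduce exactly the requirement that those edges be even. The one new feature is that a boundary circle produced by cutting along an incident \emph{arc} encircles the two order-$2$ endpoints $q_{r},q_{s}$, hence has monodromy $x_{r}x_{s}=a^{2}=\mathrm{id}$; it is automatically trivial, so incident semiedges never obstruct evenness of a vertex and (ii) reads verbatim as in the $p$-gonal case.

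For (iii), the arc $\g$ is a $1$-orbifold whose group $\Z_2*\Z_2$ abelianizes to the product of the two endpoint meridians $\d_1,\d_2$; since $\Phi(\d_1)=\Phi(\d_2)=a$ we obtain $\Im\Phi_{\g}=\langle a\rangle=\Z_2$, so a semiedge is always odd and, by Theorem~\ref{thm:MainDiaz1}(2), has a single preimage. This preimage is a genuine closed curve, not a semiedge, because $S$ carries no cone points. To decide whether it is a loop or an edge between two vertices, I would use the incidence rule together with the fact, recorded after the statement of Theorem~\ref{thm:MainDiaz1}, that for a semiedge $\Phi(c_{\g})=a$, a generator of $\Im\Phi_{\g}$. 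The equivariant description in Remark~(a) then shows that the preimage edge joins $V_{j,h\Im\Phi_j}$ to $V_{j,h\,\Phi(c_{\g})\,\Im\Phi_j}$. If $\O_j$ is odd then $\Im\Phi_j=\Z_2$, there is a single preimage vertex $V_{j,G}$, and the edge necessarily closes up into a loop; if $\O_j$ is even then $\Im\Phi_j=\{\mathrm{id}\}$, the two preimage vertices are $V_{j,\{\mathrm{id}\}}$ and $V_{j,\{a\}}$, and the edge joins them because $a\,\{\mathrm{id}\}=\{a\}\neq\{\mathrm{id}\}$.

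The parts (i) and (ii) are immediate once the neighborhood computation $a^{2}=\mathrm{id}$ for an arc is in place, so the main obstacle is the bookkeeping in (iii). The delicate point is that item (3) of Theorem~\ref{thm:MainDiaz1} is stated for an edge running between two \emph{distinct} vertices; the work is to transport its incidence rule to a loop or semiedge through the $G$-action of Remark~(a), and to justify $\Phi(c_{\g})=a$ from the explicit dual curve $c_{\g}=\b_j\,\b_{j,\g}^{a}(\b_{j,\g}^{b})^{-1}\b_j^{-1}$. With those two ingredients secured, the loop-versus-edge dichotomy is exactly the coset computation displayed above.
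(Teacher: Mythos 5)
Your proposal is correct and follows essentially the same route as the paper: parts (i) and (ii) specialize the homological computation of Lemma \ref{lem:p-gonal} to $p=2$, and part (iii) uses $\Im\Phi_{\g}=\Z_2$ together with $\Phi(c_{\g})=a$ and the incidence rule of Theorem \ref{thm:MainDiaz1}(3) to get the loop-versus-edge dichotomy. Your extra observation in (ii) that a boundary circle around an incident arc has monodromy $a^2=\mathrm{id}$, so semiedges never obstruct evenness, is a detail the paper leaves implicit but is consistent with its argument.
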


\begin{proof}
(i) If an edge $\g$ bounds a disc containing the cone points $q_{i_1}, \dots, q_{i_{2k}}$, then $\g $ is homotopic to $x_{i_1}\dots x_{i_{2k}}$ and thus $\Phi(\g)=a^{2k}=1$. Hence $\g$ is even. The converse is equal.  

(ii) Equal  as in Lemma \ref{lem:p-gonal}.

(iii) Let $\g$ be a semiedge of $\DAbajo$ starting at a vertex $\O_j$, which corresponds   to  an    arc $\g$  in the boundary of the  suborbifold $\O_j$ joining two cone points $q_i,q_j$. Then $\pi_1(\g)$ is generated by the loops $x_i,x_j$ and the dual curve $c_{\g}$ is  equal to one of these loops, so $\Phi(c_{\g})=a$.  Thus $\Im \Phi_{\g}=\Z_2$ and so  a semiedge is always odd. By Theorem \ref{thm:MainDiaz1}(3), the edge $E_{\g, g \mathbb Z_2 }$ joins the vertices $V_{j, g \Im \Phi_j }$ and $V_{j, g\Phi(c_{\g}) \Im \Phi_j }= V_{j, ga \Im \Phi_j }$. Thus, $\g$ lifts to a loop if $\O_j$ is odd and   $\g$ lifts to an edge joining the two lifts of $\O_j$ if $\O_j$ is even. 
\end{proof}

Finally, to describe the graph $\DArriba$ we have the analog result to Theorem \ref{thm:p-gonal}. See Figure \ref{fig:hyperelliptic} for an example of the dual graphs  of a multicurve $\S$ and of its preimage $\tilde\S$ under the hyperelliptic action. 

\psfrag{0}{\tiny 0}
\psfrag{1}{\tiny 1}
\psfrag{2}{\tiny 2}
\psfrag{3}{\tiny 3}

\begin{figure}
\center
\includegraphics[height=5cm,width=11cm]{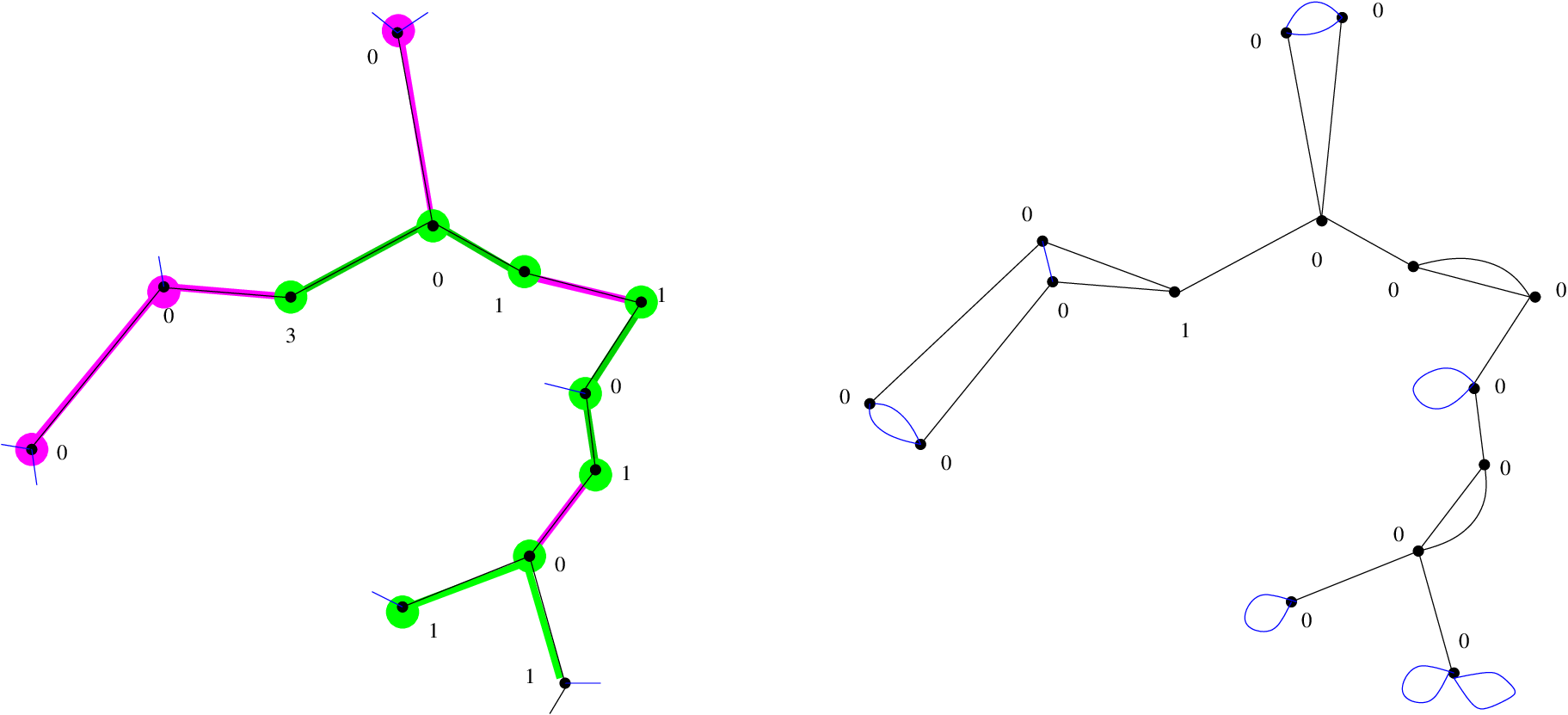}
\caption{Dual graphs  $\DAbajo, \DArriba$ of a multicurve $\S$ and of its preimage $\tilde\S$ under the hyperelliptic action on genus 12. The numbers at the vertices of $\DAbajo$ indicate the number of cone points in the corresponding suborbifold (that are not endpoints of any semiedge). Green vertices and edges are odd, hence they lift to one copy each; purple vertices and edges are even, hence lift to two copies each. Semiedges are not colored, they are always odd. 
Numbers at $\DArriba$ indicate genera.
}
\label{fig:hyperelliptic}
\end{figure}

\begin{thm} \label{thm:hyperelliptic}
Let $\Phi\colon H_1(\O ,*)\to \Z_p$ be the hyperelliptic action on genus $g$. Let $\S\subset \O$ be an admissible multicurve and let $\DAbajo$ its dual graph. Then the dual graph $\DArriba$ of $\,\tilde\S$ is obtained as $ (\D^1\sqcup \D^2)/\hspace{-0.1cm}\sim$ where $\D^1, \D^2$ are copies of $\D_{\S}$ and $\sim$ is as follows. We call $V^t_j$ to the vertex of $\D^t$ corresponding to the vertex $V_j$ of $\D_{\S}$, and similarly with the edges. Then:

\begin{enumerate}

\item  $V^1_j\sim V^2_j$ if and only if   $V_j$ is odd.

\item  An edge $E^1_j\sim E^2_j$ if and only if $E_j$ is odd.
\item A semiedge $E^1_{\g}$ is joined to $E^2_{\g}$ through their ``free   end", giving rise in $\DArriba$ to a loop or to an edge joining two different vertices as indicated in  Lemma \ref{lem:hyperelliptic}(iii). 

\item If a vertex $V=\O_j$  is even, then the degree $D_j$ of any of its preimages is equal to  the degree of $V$ and its weight $w^j$ is equal to zero.

\item  If a vertex $V=\O_j$ is odd then the  degree of any of its preimages is equal to $D_j=n_j+2(m_j+s_j)$, where  $n_j, m_j, s_j$ are, respectively the number of  odd edges, even edges and semiedges incident to $\O_j$.  Its weight is $w^j =\frac{1}{2} (n_j+c_j-2)  $, where $c_j$ is the number of cone points contained in $\O_j$.
\end{enumerate} 
\end{thm}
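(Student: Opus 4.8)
The plan is to imitate the proof of Theorem~\ref{thm:p-gonal} with $p=2$, isolating the one genuinely new feature: the semiedges. I should first note that Proposition~\ref{prop:GenusO=0} cannot be applied verbatim, since it requires every cone point of $\O$ to have order $\geq 3$, whereas here all cone points have order $2$ and the multicurve $\S$ may therefore contain arcs. Its mechanism does survive for the closed-curve edges, however: because $\O$ has genus $0$, the graph $\DAbajo$ is a tree, so every closed-curve edge lies in the spanning tree $\T$, and the construction of the dual curves in Section~\ref{sec:MainThmInDiGo} then makes $\Phi(c_{\g})$ trivial for each such edge, exactly as in the $p$-gonal case. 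Since $G=\Z_2$, the only possible values of $\Im\Phi_j$ and $\Im\Phi_{\g}$ are the trivial group and $\Z_2$, so Theorem~\ref{thm:MainDiaz1}(1)--(3) immediately give items (1) and (2): an odd vertex or odd closed-curve edge (full image) has a single preimage, forcing $V^1_j\sim V^2_j$ (resp.\ $E^1_j\sim E^2_j$), whereas an even one (trivial image) has two preimages, and the triviality of $\Phi(c_{\g})$ joins the copies $\D^1,\D^2$ without any twist, reproducing the closed-curve part of the quotient picture of Proposition~\ref{prop:GenusO=0}.

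The new phenomenon is item (3). A semiedge $\g$ lies in $\S^1$, and by Lemma~\ref{lem:hyperelliptic}(iii) it is always odd, with $\Im\Phi_{\g}=\Z_2$ and $\Phi(c_{\g})=a$. By Theorem~\ref{thm:MainDiaz1}(2) it has a single preimage, an edge $E_{\g,g\Z_2}$, and by Theorem~\ref{thm:MainDiaz1}(3) this edge joins $V_{j,g\Im\Phi_j}$ to $V_{j,ga\Im\Phi_j}$. When $\O_j$ is odd, $\Im\Phi_j=\Z_2$ and both endpoints are the unique preimage of $\O_j$, so the lift is a loop; when $\O_j$ is even, $g\Im\Phi_j$ and $ga\Im\Phi_j$ are its two distinct preimages, so the lift is an edge between them. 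This is exactly the assertion of item (3), and it is here that the nontriviality of $\Phi(c_{\g})$ (absent for closed-curve edges) produces behaviour with no analogue in the $p$-gonal setting.

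For items (4) and (5) I would substitute into Theorem~\ref{thm:MainDiaz1}(4)--(5), the crucial input being the orbifold Euler characteristic of $\O_j$. I claim that $\O_j$ has genus $0$, exactly $c_j$ interior cone points of order $2$, and one boundary circle for each of the $n_j+m_j$ incident closed-curve edges together with one more for each incident semiedge; this last point is the subtle one, since cutting $\O$ along an arc joining two order-$2$ cone points turns each endpoint into a smooth boundary point and doubles the arc into a single boundary loop. Consequently
\begin{equation*}
\chi(\O_j)=2-(n_j+m_j+s_j)-\tfrac{c_j}{2}.
\end{equation*}
For an even vertex $|\Im\Phi_j|=1$ and Lemma~\ref{lem:hyperelliptic}(ii) gives $n_j=c_j=0$, while Theorem~\ref{thm:MainDiaz1}(4) gives $D_j=m_j+s_j$; hence $w^j=1-\tfrac{1}{2}(\chi(\O_j)+D_j)=0$, as in Theorem~\ref{thm:p-gonal}(3). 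For an odd vertex $|\Im\Phi_j|=2$; splitting $\S_j\cap\S^2$ into the odd edges ($|\Im\Phi_{\g}|=2$) and the even edges ($|\Im\Phi_{\g}|=1$) and noting that $\S_j\cap\S^1$ consists of the semiedges ($|\Im\Phi_{\g}|=2$, each carrying the extra factor $2$), Theorem~\ref{thm:MainDiaz1}(4) gives $D_j=n_j+2(m_j+s_j)$, and feeding $\chi(\O_j)$ and $D_j$ into Theorem~\ref{thm:MainDiaz1}(5) collapses to $w^j=\tfrac{1}{2}(n_j+c_j-2)$.

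I expect the only real obstacle to be this Euler-characteristic bookkeeping for the semiedges, precisely because the two order-$2$ cone points at the ends of an arc are cone points of $\O$ that are deliberately excluded from $c_j$ yet still govern the local topology of $\O_j$. I would justify the claim that each semiedge contributes one boundary circle, with smooth endpoints, through the local branched model $w^2=z^2-1$, in which the segment joining the two branch points lifts to a single circle and the two order-$2$ points open up into smooth boundary; as an independent check I would confirm that summing $|\Im\Phi_j|\,\chi(\O_j)$ over the vertices, together with the edge and semiedge contributions, recovers $\chi(S)=2-2g$ via Riemann--Hurwitz, which simultaneously validates the degree and weight formulas.
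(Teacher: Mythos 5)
Your proof is correct and follows essentially the same route as the paper: items (1)--(3) come from Theorem \ref{thm:MainDiaz1}(1)--(3) together with Lemma \ref{lem:hyperelliptic}, and items (4)--(5) are the same substitution into Theorem \ref{thm:MainDiaz1}(4)--(5) using $\chi(\O_j)=2-(n_j+m_j+s_j)-\tfrac{c_j}{2}$, which is exactly the Euler characteristic the paper uses. Your extra care about why Proposition \ref{prop:GenusO=0} does not apply verbatim and about the boundary circle created by each arc is a welcome clarification but does not change the argument.
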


\begin{proof}
(1) and (2) are as in Theorem \ref{thm:p-gonal} and (3) is Lemma \ref{lem:hyperelliptic}(iii). 

(4) Let $\O_j$ be an even vertex, so $|\Im\Phi_{j}|=1$.
By Lemma \ref{lem:hyperelliptic}(ii), each edge $\g$ incident to $\O_j$ is even, so $|\Im\Phi_{\g}|=1$, and by part (iii) of that lemma,  any semiedge $\g$  is odd so $2\frac{1}{|\Im\Phi_{\g}|}=1$. Thus, by Theorem \ref{thm:MainDiaz1}(4), $D_j$ is equal to the degree of $\O_j$.

To compute the weight, by  Lemma \ref{lem:hyperelliptic}(ii), $\O_j$ does not contain any cone point, so that $\chi(\O_j)=2-deg(\O_j)$. Then using Theorem \ref{thm:MainDiaz1}(5), we obtain that $w^j=0$.

(5) Let $V_j=\O_j$ be an odd vertex, so $|\Im\Phi_{j}|=2$. By Theorem \ref{thm:MainDiaz1}, we have that
$$
D_j= 2(n_j\frac{1}{2}+m_j \frac{1}{1}+ 2s_j \frac{1}{2})=n_j+ 2(m_j+s_j).
$$
The weight of any preimage of $V_j$ is equal to
$$
w^j=1-\frac{1}{2}\left(2\left(2-n_j-m_j-s_j  -c_j\frac{1}{2}\right)+n_j+ 2(m_j+s_j)\right)=\frac{1}{2} (n_j+c_j-2).
$$
\end{proof}

\section{Action by an abelian group}\label{sec:Abelian}

In this section we consider any action by an abelian group and study the preimage of a multicurve  consisting on just one curve $\S=\{\g\}$. This will provide the topological strata of maximal dimension in the boundary of the corresponding equisymmetric loci.  We study three cases, depending on whether $\g$ is a separating or non-separating simple closed curve or is an arc. 

\subsection{Case 1:  separating simple closed curve.}

 In this case we will show in Proposition \ref{prop:Abelian-I} that the dual graph of the preimage is a complete bipartite multigraph. 

\noindent {\bf Notation.} We denote by $K_{m,n}^d$ the complete bipartite graph of order $d$  between a subset of  $m$ vertices and a subset of $n$ vertices, i.e., each vertex of the first subset is joined to each vertex of the second subset with $d$ edges.

In the proof of Proposition \ref{prop:Abelian-I} we will need the following result about abelian groups which says that if an abelian group is generated by two subgroups, then any   class  with respect to  the first subgroup intersects any class with the second subgroup in a class with respect the intersection of both subgroups.

\begin{lem}\label{lem:IntersecClassesAbelian}
Let $G$ be an abelian group. Let $H_1,H_2$ be subgroups such that $G=H_1 H_2$. Then for all $a,b\in G$ there is $c\in G$ such that  $aH_1\cap bH_2 = c(H_1\cap H_2)$. As a consequence, $|G|=\frac{|H_1||H_2|}{|H_1\cap H_2|}$.
\end{lem}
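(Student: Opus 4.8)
The plan is to prove both assertions by analyzing the natural map that sends a coset of $H_1$ and a coset of $H_2$ to the double-coset data they determine. First I would establish the statement about $aH_1 \cap bH_2$. The key observation is that this intersection is nonempty: since $G = H_1 H_2$, the element $a^{-1}b$ can be written as $h_1 h_2$ with $h_i \in H_i$, and then a short computation (using commutativity to move factors past each other) produces an explicit element $c$ lying in $aH_1 \cap bH_2$. Concretely, I expect $c = a h_1 = b h_2^{-1}$ to work after matching up the factorization; the abelian hypothesis is exactly what lets me rearrange $a^{-1}b = h_1 h_2$ into the needed form.

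Once nonemptiness is secured with a witness $c$, the heart of the argument is to show $aH_1 \cap bH_2 = c(H_1 \cap H_2)$. The inclusion $\supseteq$ is immediate, since $c \in aH_1$ and $c \in bH_2$ forces $c(H_1 \cap H_2) \subseteq aH_1$ and $\subseteq bH_2$. For the reverse inclusion $\subseteq$, I would take any $x \in aH_1 \cap bH_2$ and show $c^{-1}x \in H_1 \cap H_2$: from $x, c \in aH_1$ we get $c^{-1}x \in H_1$, and symmetrically $c^{-1}x \in H_2$, hence $c^{-1}x \in H_1 \cap H_2$. This part is formal and does not even require commutativity once the witness $c$ is in hand.

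For the cardinality consequence, I would count the cosets. The map that assigns to each $g \in G$ the pair $(gH_1, gH_2)$ has image contained in the product of the coset spaces $G/H_1 \times G/H_2$, and by the first part every pair of the form $(aH_1, bH_2)$ with nonempty intersection arises and the fiber of $g$ over its image pair is exactly the coset $g(H_1 \cap H_2)$. Thus the number of elements of $G$ equals the number of nonempty intersection cosets times $|H_1 \cap H_2|$. Since $G = H_1 H_2$ guarantees \emph{every} pair $(aH_1, bH_2)$ has nonempty intersection, the number of such pairs is $\frac{|G|}{|H_1|}\cdot \frac{|G|}{|H_2|}$, and equating gives $|G| = \frac{|G|^2}{|H_1||H_2|}\,|H_1\cap H_2|$, which rearranges to the claimed formula. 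Alternatively, and perhaps more cleanly, I would invoke the second isomorphism theorem: since $G$ is abelian both subgroups are normal, so $H_1 H_2 / H_2 \cong H_1/(H_1 \cap H_2)$, giving $\frac{|H_1 H_2|}{|H_2|} = \frac{|H_1|}{|H_1 \cap H_2|}$, and substituting $H_1 H_2 = G$ finishes it.

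I do not anticipate a serious obstacle here; the only point requiring care is producing the explicit witness $c$ and verifying it lies in both cosets, where I must be attentive to the order of factors even though commutativity ultimately trivializes this. The main decision is stylistic: whether to prove the counting formula by the direct fiber-counting argument or to simply cite the second isomorphism theorem for abelian (hence all subgroups normal) groups. I would present the explicit witness for the first claim, since it is the part genuinely used later, and then dispatch the cardinality with the isomorphism-theorem remark as the quickest route.
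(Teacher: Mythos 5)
Your proof is correct and follows essentially the same route as the paper's: both extract an explicit witness in $aH_1\cap bH_2$ from the factorization afforded by $G=H_1H_2$, verify the two inclusions, and deduce the index formula by counting cosets of $H_1\cap H_2$ inside the cosets of $H_1$ (your second-isomorphism-theorem shortcut is an equally valid finish). One minor remark: your organization --- produce one common element $c=ah_1=bh_2^{-1}$ and then invoke the formal fact that a nonempty intersection of cosets is a coset of the intersection --- actually shows the first claim needs no commutativity at all, whereas the paper's computation of $aH_1\cap H_2=a_2(H_1\cap H_2)$ uses it; this is a slight sharpening, not a gap.
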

\begin{proof}

Suppose first that $b=1$. Since $G=H_1H_2$, $a=a_1a_2$ with $a_i\in H_i$. Let us check that $aH_1\cap H_2= a_2(H_1\cap H_2)$. Indeed, on the one hand, if $h\in H_1\cap H_2$, we have that $a_2h\in H_2$ and $a_2h=a a_1^{-1}h\in aH_1$. On the other hand take $y\in aH_1\cap H_2$, i.e., $y=ak_1$ for some $k_1\in H_1$ and $y=k_2$ for some $k_2\in H_2$. We want to show that $y\in a_2(H_1\cap H_2)$ or equivalently that $a_2^{-1}y\in  (H_1\cap H_2)$. But 
$ 
a_2^{-1}y= a_2^{-1}k_2\in H_2$, and 
$$
a_2^{-1}y=a_2^{-1}ak_1=a_2^{-1}a_1a_2k_1=a_1k_1\in H_1,
$$
 so we are done. 

For the general case, applying the previous argument to $a'=ab^{-1} $ we have that  $a'H_1\cap H_2= a'' (H_1\cap H_2)$ for some $a''\in G$. Thus 
$
b(a'H_1\cap H_2)= ba'' (H_1\cap H_2)  
$, i.e., $
aH_1\cap bH_2= ba'' (H_1\cap H_2)  
$.

As a consequence, any class $ aH_1$ is a union of  
$[G:H_2]$ 
classes with respect to $H_1\cap H_2$, and so 
$$
[G:(H_1\cap H_2)] = [G:H_1][G:H_2]
$$
and we easily obtain the result. 
\end{proof}

\begin{prop}\label{prop:Abelian-I}
Consider a topological action on a surface described by an epimorphism $\Phi\colon \pi_1(\O)\to G$, with $G$ an abelian group. Let $\g\subset\O$ be a closed curve separating $\O$ in two suborbifolds $\O_1,\O_2$ and consider the multicurve
  $\S=\{\g\}$. Then  $\DArriba $ is isomorphic to the graph $K_{m,n}^d$, where $m=\frac{|G|}{|\Im \Phi_1|}$, $n=\frac{|G|}{|\Im \Phi_2|}$  and $d=\frac{r }{mn}$, with  $r=\frac{|G|}{|\Im \Phi_{\g}|}$.
\end{prop}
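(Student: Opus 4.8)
The plan is to read the combinatorial structure of $\DArriba$ directly off Theorem \ref{thm:MainDiaz1} and then pin down the edge multiplicities between pairs of vertices with the group-theoretic Lemma \ref{lem:IntersecClassesAbelian}. Write $H_1=\Im\Phi_1$, $H_2=\Im\Phi_2$ and $K=\Im\Phi_{\g}$. Since $\g$ is the single separating curve, $\DAbajo$ consists of the two vertices $\O_1,\O_2$ joined by the edge $\g\in\S^2$. By Theorem \ref{thm:MainDiaz1}(1)--(2) the preimages of $\O_1$, of $\O_2$ and of $\g$ are indexed by the cosets $V_{1,gH_1}$, $V_{2,gH_2}$ and $E_{\g,gK}$, giving exactly $m=|G|/|H_1|$, $n=|G|/|H_2|$ and $r=|G|/|K|$ of them. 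By item (3) each preimage edge joins a preimage of $\O_1$ to a preimage of $\O_2$, so $\DArriba$ is bipartite with the two parts being the preimages of $\O_1$ and of $\O_2$, and no edge lies inside a part.

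The first ingredient I need is the pair of relations $G=H_1H_2$ and $K\subseteq H_1\cap H_2$. The former holds because $\pi_1(\O)$ is generated by $\pi_1(\O_1)$ and $\pi_1(\O_2)$ (van Kampen along $\g$), so after abelianising and applying the surjection $\Phi$ one gets $G=\Phi(\pi_1(\O_1))\,\Phi(\pi_1(\O_2))=H_1H_2$. The latter holds because $\g$ lies in the closures of both $\O_1$ and $\O_2$, so its homology class, which generates the cyclic group $K$, lies in both $H_1$ and $H_2$. The precise value of $\Phi(c_{\g})$ is irrelevant here: as $\g$ is its own spanning tree the dual curve may be taken trivial, but even a nontrivial $\Phi(c_{\g})=h$ would only translate the second coordinate of the side-$2$ vertices, i.e.\ permute the vertices of one part, leaving the count below unchanged.

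Next I fix a vertex $V_{1,aH_1}$ in the first part and a vertex $V_{2,bH_2}$ in the second and count the edges joining them. By item (3) the edge $E_{\g,gK}$ is incident to both precisely when $g\in aH_1\cap bH_2$ (absorbing $\Phi(c_\g)$ into $b$ if necessary). By Lemma \ref{lem:IntersecClassesAbelian} applied to $G=H_1H_2$, this intersection is \emph{nonempty} and equals a single coset $c(H_1\cap H_2)$. Since $K\subseteq H_1\cap H_2$, that coset is a disjoint union of $K$-cosets, so the number of \emph{distinct} edges $E_{\g,gK}$ with $g\in c(H_1\cap H_2)$ equals $|H_1\cap H_2|/|K|$. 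This value does not depend on the chosen pair $(a,b)$, hence every pair of vertices from opposite parts is joined by the same number $d$ of edges; that is, $\DArriba\cong K_{m,n}^{d}$ with $d=|H_1\cap H_2|/|K|$, which is a positive integer because $K\subseteq H_1\cap H_2$.

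Finally I reconcile this with the stated formula $d=r/(mn)$: using the index identity $|G|=|H_1||H_2|/|H_1\cap H_2|$ from Lemma \ref{lem:IntersecClassesAbelian}, one computes $r/(mn)=\frac{|G|/|K|}{(|G|/|H_1|)(|G|/|H_2|)}=\frac{|H_1||H_2|}{|K|\,|G|}=\frac{|H_1\cap H_2|}{|K|}=d$. The main obstacle is the counting step: proving that \emph{every} pair of opposite vertices is joined and by a \emph{constant} number of edges. Both the nonemptiness and the constancy rest squarely on Lemma \ref{lem:IntersecClassesAbelian}, whose application requires the generation hypothesis $G=H_1H_2$; without commutativity (as the pyramidal example in Section \ref{sec:Simplification-Abelian} illustrates) this product structure, and hence the clean complete bipartite description, can fail.
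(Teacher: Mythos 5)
Your proposal is correct and follows essentially the same route as the paper's proof: van Kampen to get $G=\Im\Phi_1\,\Im\Phi_2$ and $\Im\Phi_\g\subseteq\Im\Phi_1\cap\Im\Phi_2$, Theorem \ref{thm:MainDiaz1} for the vertex/edge counts and bipartiteness, and Lemma \ref{lem:IntersecClassesAbelian} to show each coset intersection $a\Im\Phi_1\cap b\Im\Phi_2$ is a single nonempty coset of $\Im\Phi_1\cap\Im\Phi_2$ splitting into $|\Im\Phi_1\cap\Im\Phi_2|/|\Im\Phi_\g|$ cosets of $\Im\Phi_\g$, followed by the same index computation to identify this with $r/(mn)$. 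The only cosmetic difference is that you allow a possibly nontrivial $\Phi(c_\g)$ and absorb it into $b$, whereas the paper simply notes $c_\g$ is trivial since $\D_\S$ is a tree; both are fine.
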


\begin{proof} 
Since $G$ is abelian, we can work either with fundamental groups or with homology groups.  
 
By Van-Kampen theorem (or Mayer-Vietoris sequence), we have  that $H_1(\O) $ is generated by the subgroups $H_1(\O_1),H_1(\O_2)$. Also notice that   $H_1(\O_1)\cap H_1(\O_2)=H_1(\g)$. Since $\Phi$ is surjective, these properties pass to $G$, precisely, 
 $G$ is generated by $\Im\Phi_1,\Im\Phi_2$ and $\Im\Phi_1\cap \Im\Phi_2 \supset \Im \Phi_{\g}$.

By  Theorem \ref{thm:MainDiaz1}, the number of preimages of the vertex $ \O_j$ is $|\frac{G}{\Im \Phi_j}|, j=1,2$, and the number of preimages of $\g$  is $|\frac{G}{\Im\Phi_{\g}}|$.  Since the edge  $\g$ in $\DAbajo$ joins $\O_1,\O_2$, each preimage of $\g$  joins a preimage of $\O_1$ with a preimage of $\O_2$. 
 Thus $\DArriba $ is bipartite.

Given two vertices $ V_{1,a\Im\Phi_1},  V_{2,b\Im\Phi_2}$, we will show that there is some edge $E_{\g,c\Im \Phi_{\g}}$ joining them.
Indeed,  by Theorem \ref{thm:MainDiaz1}(3), an edge  $E_{\g,c\Im\Phi_{\g}}$ joins   $V_{1,a\Im\Phi_1}$ and  $V_{2,b\Im\Phi_2}$ if and only if $c\in a\Im\Phi_1$ and $c\Phi(c_{\g})\in b\Im\Phi_2$. Notice that   $c_{\g}$ is trivial (because $\D_{\S}$ is a tree), so the above is equivalent to 
 $$
 c\in a\Im\Phi_1\cap b\Im\Phi_2 =c'(\Im\Phi_1\cap \Im\Phi_2),
 $$
for some $c'\in G$, by  Lemma \ref{lem:IntersecClassesAbelian}. Since
$\Im\Phi_{\g}$ is a subgroup of $\Im\Phi_1\cap \Im\Phi_2$, the class $c'(\Im\Phi_1\cap \Im\Phi_2)$ is divided into $d'=\frac{|\Im\Phi_1\cap \Im\Phi_2|}{|\Im\Phi_{\g} |}$ classes with respect to $\Im\Phi_{\g}$. Thus there are $d'$ edges joining $ V_{1,a\Im\Phi_1},  V_{2,b\Im\Phi_2}$. Finally, by Lemma \ref{lem:IntersecClassesAbelian}, we have
$$
d' = \frac{|\Im\Phi_1\cap \Im\Phi_2|}{|\Im\Phi_{\g} |} = \frac{|\Im\Phi_1|\,|\Im\Phi_2|}{|G||\Im\Phi_{\g} |} = \frac{|\Im\Phi_1|}{|G| } \frac{|\Im\Phi_2|}{|G| } \frac{|G|}{ |\Im\Phi_{\g} | } = \frac{r}{mn}=d
$$
\end{proof}

 \noindent
 {\bf Remark.} In the situation of Proposition \ref{prop:Abelian-I} but with $G$ non-abelian,   the graph $\DArriba$ is also contained in a graph of the form $K^{d'}_{m,n}$, but is not clear to be  complete. 
 
\subsection{Case II: non-separating simple closed curve.}

 In this case we will show in Proposition \ref{prop:Abelian-II} that if $\S$ is a multicurve consisting on a non-separating simple closed curve, then  the dual graph of its  preimage is a multiple cycle $C^d_n$, that is, 
  a cycle of length $n$ and order $d$, i.e., each edge is multiple of order $d$.

\begin{prop}\label{prop:Abelian-II}
Consider a topological action on a surface described by an epimorphism $\Phi\colon \pi_1(\O)\to G$, with $G$ an abelian  group. Let $\g\subset\O$ be a non-separating simple closed curve and consider the multicurve
  $\S=\{\g\}$ and the suborbifold $\O_1=\O\setminus\g$. Then  $\DArriba $ is isomorphic to the graph $C_{m }^d$, where $m=\frac{|G|}{|\Im \Phi_1|}$, $d= \frac{|\Im \Phi_1| }{|\Im \Phi_{\g}|} $.
\end{prop}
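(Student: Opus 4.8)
The plan is to analyze the structure coming from Theorem \ref{thm:MainDiaz1} in the case of a single non-separating curve. Since $\g$ is non-separating, the complement $\O_1 = \O\setminus\g$ is a single suborbifold, so $\DAbajo$ is the graph with one vertex $\O_1$ and one loop-edge $\g$ attached to it. The preimage $\DArriba$ therefore has $m = |G|/|\Im\Phi_1|$ vertices (the cosets $g\,\Im\Phi_1$) and $r = |G|/|\Im\Phi_\g|$ edges (the cosets $g\,\Im\Phi_\g$). First I would record that, because $\g$ is a loop of $\DAbajo$ adjacent to the single vertex $\O_1$, the element $\Phi(c_\g)$ is no longer trivial (in contrast to Case I): the dual curve $c_\g$ crosses $\g$ once, and $\Phi(c_\g)$ is the ``monodromy'' that shifts one coset of $\Im\Phi_1$ to the next. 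Let me write $t = \Phi(c_\g)$ for this distinguished element of $G$.

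The key step is to apply Theorem \ref{thm:MainDiaz1}(3) to the loop: the edge $E_{\g,\,g\Im\Phi_\g}$ joins $V_{1,\,g\Im\Phi_1}$ to $V_{1,\,gt\,\Im\Phi_1}$. Thus on the vertex set $G/\Im\Phi_1$, every edge connects a coset $g\Im\Phi_1$ to its image under right-translation by $t$, i.e.\ to $gt\,\Im\Phi_1$. This makes the underlying simple graph a single cycle: translation by $t$ generates a cyclic permutation of the cosets $G/\Im\Phi_1$ whose orbit length is the order of $t\,\Im\Phi_1$ in the quotient. I would argue that this orbit is a \emph{single} cycle through all $m$ vertices, using that $\Im\Phi_1$ together with $t$ generates $G$ (which follows from surjectivity of $\Phi$, since $H_1(\O)$ is generated by $H_1(\O_1)$ and the class of $c_\g$, exactly as in the Van Kampen argument of Proposition \ref{prop:Abelian-I}). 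Hence the simple graph is $C_m$, a cycle of length $m$.

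Finally I would count multiplicities to identify the order $d$. Fix a pair of adjacent vertices $g\Im\Phi_1$ and $gt\Im\Phi_1$ and count the edges $E_{\g,\,h\Im\Phi_\g}$ joining them: by Theorem \ref{thm:MainDiaz1}(3) these are exactly the cosets $h\Im\Phi_\g$ with $h\in g\Im\Phi_1$ and $ht\in gt\Im\Phi_1$, and since $G$ is abelian the second condition reduces to the first, so the relevant $h$ range over the single coset $g\Im\Phi_1$. This coset is a union of $|\Im\Phi_1|/|\Im\Phi_\g|$ cosets of $\Im\Phi_\g$ (here $\Im\Phi_\g\subseteq\Im\Phi_1$ because $\g$ lies in the boundary of $\O_1$), giving exactly $d = |\Im\Phi_1|/|\Im\Phi_\g|$ parallel edges between each adjacent pair. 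A quick total check confirms $md = |G|/|\Im\Phi_\g| = r$ edges, as required. Therefore $\DArriba \cong C_m^d$.

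\textbf{Main obstacle.} The delicate point I expect is not the edge-multiplicity count but verifying that translation by $t$ produces one full $m$-cycle rather than several shorter disjoint cycles; this is where connectedness of the preimage—equivalently, that $t$ together with $\Im\Phi_1$ generates all of $G$—must be used carefully. If one only knew $\langle t, \Im\Phi_1\rangle \subsetneq G$ the graph would split into several parallel cycles, so the surjectivity of $\Phi$ and the non-separating hypothesis on $\g$ are exactly what rules this out.
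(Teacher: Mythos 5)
Your proposal is correct and follows essentially the same route as the paper's proof: identify the monodromy element $\Phi(c_\gamma)$, use that $\Im\Phi_1$ together with this element generates $G$ (so translation by it acts as a single $m$-cycle on the cosets $G/\Im\Phi_1$), and then count $|\Im\Phi_1|/|\Im\Phi_\gamma|$ parallel edges between adjacent vertices. The only cosmetic difference is that you count the multiplicity directly as cosets of $\Im\Phi_\gamma$ inside one coset of $\Im\Phi_1$, whereas the paper shows that edges indexed by $cz\Im\Phi_\gamma$, $z\in\Im\Phi_1$, share endpoints and then divides the total edge count; these are equivalent.
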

\begin{proof}
The graph $\DAbajo$ consists on one vertex and one loop. By Theorem \ref{thm:MainDiaz1}, the graph $\DArriba$ has $m$ vertices and $ \frac{|G|}{|\Im \Phi_{\g}|}$  edges. Let $h=\Phi(c_{\g})$. Because $G$ is finite, some power of $h$ is contained in $\Im \Phi_{\g}$.  Let $r$ be the minimum positive number such that $h^r\in\Im\Phi_1$.

The fundamental group $\pi_1(\O)$ is generated by $\pi_1(\O_1)$ and by $c_{\g}$, and hence $G$ is generated by $\Im\Phi_1$ and by $\langle h\rangle $. Because $G$ is abelian, the above means that any  $g\in G$ can be written as $g=h^th_1$, for some integer $t$ and some $h_1\in \Im\Phi_1$. In other words,
$$
G=1\Im\Phi_1 \cup h\Im\Phi_1 \cup \dots \cup h^{r-1}\Im\Phi_1.
$$
Notice that any two of the above classes are different, since otherwise it would contradict the definition of $r$. In particular $r=m$.

Now, the edge $E_{\g,1\Im\Phi_{\g}}$ joins $V_{1,1\Im\Phi_1}$ with $V_{1,h\Im\Phi_1}$; the edge $E_{\g,h\Im\Phi_{\g}}$ joins $V_{1,h\Im\Phi_1}$ with $V_{1,h^2\Im\Phi_1}$; ... the edge $E_{\g,h^{r-1}\Im\Phi_{\g}}$ joins $V_{1,h^{r-1}\Im\Phi_1}$ with $V_{1,h^r\Im\Phi_1}=V_{1,1\Im\Phi_1}$. In this way we obtain a cycle of $r$ different vertices, so all the vertices are  in a cycle.

Finally, let us see that the above cycle is multiple of order $d$. First, notice that  if $E_{\g,c\Im\Phi_{\g}}$ joins $V_{1,a\Im\Phi_1}$ with $V_{1,b\Im\Phi_1}$, then $E_{\g,cz\Im\Phi_{\g}}$ joins the same vertices, for all $z\in\Im\Phi_1$ (indeed, if  $c\in a\Im\Phi_1$ and $ch\in b\Im\Phi_1$, then $cz\in a\Im\Phi_1$ and $czh\in b\Im\Phi_1$, since $G$ is abelian).
Thus, each pair of vertices joined by an edge are actually joined by $\frac{|\Im\Phi_1|}{|\Im\Phi_{\g}|} $ different edges.
Since the total number of edges of $\DArriba$ is $ \frac{|G|}{|\Im \Phi_1|}  \frac{|\Im \Phi_1| }{|\Im \Phi_{\g}|} =md$, we have the result.
\end{proof}

\subsection{Case III: multicurve is an arc.}
Finally in this section we study the case where the multicurve consists in just one arc. 

\begin{prop}\label{prop:Abelian-III}
Consider a topological action on a surface described by an epimorphism $\Phi\colon \pi_1(\O)\to G$, with $G$ an abelian group. Let $\g\subset\O$ be an arc and consider the multicurve
  $\S=\{\g\}$. Then  $\DArriba $ consists on
  \begin{itemize}
  \item[(i)] either a single vertex with $\frac{|G|}{2}$ or $\frac{|G|}{4}$  loops; or
  \item[(ii)] two different vertices with $\frac{|G|}{2}$ or $\frac{|G|}{4}$ joining them.
   \end{itemize}
\end{prop}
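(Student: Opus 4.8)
The plan is to exploit, as in the previous two cases, that $G$ abelian lets us replace fundamental groups by homology and to feed the relevant indices into Theorem \ref{thm:MainDiaz1}. Write $q_i,q_j$ for the two order-2 cone points that are the endpoints of $\g$, and let $x_i,x_j$ be small loops around them, so that $\Im\Phi_\g$ is generated by $a_i:=\Phi(x_i)$ and $a_j:=\Phi(x_j)$, which are elements of order exactly $2$ (since $q_i,q_j$ are genuine branch points of order $2$). First I would compute $\Im\Phi_\g=\langle a_i,a_j\rangle$: because $a_i,a_j$ are order-2 elements of an abelian group, this subgroup is $\{1,a_i\}$ when $a_i=a_j$ and $\{1,a_i,a_j,a_ia_j\}\cong\Z_2\times\Z_2$ when $a_i\neq a_j$, so $|\Im\Phi_\g|\in\{2,4\}$. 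By Theorem \ref{thm:MainDiaz1}(2) the semiedge $\g$ therefore has $|G|/|\Im\Phi_\g|=|G|/2$ or $|G|/4$ preimages, which accounts for the stated number of edges in both alternatives.

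Next I would count the vertices, that is, determine $[G:\Im\Phi_1]$ for the single vertex $\O_1=\O\setminus\g$ (cutting a connected orbifold along an arc joining two interior points leaves it connected, so $\DAbajo$ has exactly one vertex and one semiedge). The key observation is that deleting $\g$ costs exactly the individual meridians $x_i,x_j$ but gains a loop $\lambda$ encircling the whole arc, which is homologous to $x_ix_j$. Concretely, $\O$ is the union of $\O_1$ with a neighbourhood of $\g$ whose orbifold fundamental group is $\langle x_i,x_j\rangle$, so by an abelianized Mayer-Vietoris/van Kampen argument $G=\langle\Im\Phi_1,a_i,a_j\rangle$, while $\lambda\in\pi_1(\O_1)$ gives $a_ia_j\in\Im\Phi_1$. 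Consequently $G/\Im\Phi_1$ is generated by the single class $\bar a_i$ (since $\bar a_j=\bar a_i^{-1}=\bar a_i$), and $\bar a_i^{\,2}=\overline{a_i^{\,2}}=1$; hence $[G:\Im\Phi_1]\in\{1,2\}$, giving one or two vertices.

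Finally I would assemble the graph using Theorem \ref{thm:MainDiaz1}(3): each lift $E_{\g,g\Im\Phi_\g}$ joins $V_{1,g\Im\Phi_1}$ to $V_{1,g\Phi(c_\g)\Im\Phi_1}$, where $\Phi(c_\g)$ is the generator of $\Im\Phi_\g$ coming from a single endpoint, say $a_i$. If $[G:\Im\Phi_1]=1$ then $a_i\in\Im\Phi_1$ and every lift is a loop at the unique vertex, which is alternative (i); if $[G:\Im\Phi_1]=2$ then, since $G/\Im\Phi_1=\langle\bar a_i\rangle$ has order $2$, we have $a_i\notin\Im\Phi_1$, so $g\Im\Phi_1\neq ga_i\Im\Phi_1$ and every lift joins the two distinct vertices, which is alternative (ii). I expect the only delicate point to be the identification of $\Im\Phi_1$ for the cut orbifold --- specifically, verifying that the sole new relation introduced by deleting the arc is $a_ia_j\in\Im\Phi_1$ and that no further generators are lost --- but the abelianization removes all base-point bookkeeping and reduces this to a short homological computation.
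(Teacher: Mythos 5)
Your proposal is correct and follows essentially the same route as the paper: identify $\Im\Phi_\g=\langle a_i,a_j\rangle$ as $\Z_2$ or $\Z_2\times\Z_2$ to get the edge count, observe that the loop encircling the arc (homologous to $x_ix_j$) lies in $\pi_1(\O_1)$ so that $G$ is generated by $\Im\Phi_1$ together with a single order-2 element $a_i$, deduce $[G:\Im\Phi_1]\in\{1,2\}$, and use Theorem \ref{thm:MainDiaz1}(3) with $\Phi(c_\g)=a_i$ to decide whether the lifts are loops or edges between the two distinct vertices. The Mayer--Vietoris phrasing of the generation statement is a cosmetic difference only; the argument is the paper's.
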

\begin{proof}
The curve $\g$ is an arc joining two cone points of order 2, and therefore $\Im\Phi_{\g}$ is generated by two elements of order 2. Since $G$ is abelian, the only possibilities is that $\Im\Phi_{\g}\sim \Z_2$ or $\Im\Phi_{\g}\sim \Z_2\times \Z_2$.

Notice that  $\pi_1(\O) $ is generated by $\pi_1(\O_1) $ and by a loop $\a$  surrounding one of the endpoints of $\g$. This is so because the the loop surrounding $\g$, which is the product of the 2 generators of $\pi_1(\g)$, also belongs to $\pi_1(\O_1)$. Taking images by $\Phi$ we have that $G$ is generated by $\Im\Phi_1$ and by $a=\Phi(\a) $.
As a consequence we have:
\begin{itemize}
\item[(i)] If $a\in\Im\Phi_1$, i.e., $\Im\Phi_1=G$, then $\DArriba$ has 1 vertex. The number of edges is $\frac{|G|}{2}$ or $\frac{|G|}{4}$ depending on whether $\Im\Phi_{\g}$ is isomorphic, respectively,  to $\Z_2$ or  $\Z_2\times \Z_2$.  Since there is only one vertex, all the edges are loops.
\item[(ii)]  If $a\not\in\Im\Phi_1$, then $G=\Im\Phi_1\cup a \,\Im\Phi_1$ and so   $\DArriba$ has 2 vertices. The number of edges is exactly as before.
It is left to see whether or not  each edge  joins the two different vertices.
To see this, notice that we can take  $c_{\g}=\a$, and so $\Phi(c_{\g})=a$.  The edge $E_{\g,c\Im\Phi_{\g}}$ joins $V_{1,c\Im\Phi_1}$ and $V_{1,ca\Im\Phi_1}$. The lateral classes $c\Im\Phi_1, ca\Im\Phi_1$  are always different because $c(ca)^{-1}=a^{-1} \not\in \Im\Phi_1$. Thus, any edge joins the 2 different vertices.
\end{itemize}
\end{proof}

\noindent
{\bf Remark.} In the situation of Proposition \ref{prop:Abelian-III} but with $G$ non-abelian, more options   can appear. For instance, in \cite{diaz1} it is shown that in the pyramidal action with $G=D_n$ the dihedral group of order $2n$, any graph  with a single vertex and  $k$ loops with  $k$ dividing $n$ appears as one of the graphs $\DArriba$.

  \subsection{Examples.} \label{sec:Examples-1curve}
  We show examples proving  that all the   graphs appearing in Propositions \ref{prop:Abelian-I}, \ref{prop:Abelian-II} and \ref{prop:Abelian-III} actually occur, that is, each of them is at the boundary of the equisymmetric locus of some abelian action.

\noindent
{\bf Examples 1. } Let $\O$  be an orbifold of signature $(0; 2^4,3^2,5^2)$. Denote by $q_i, i=1,\dots, 8$ its singular points with order, respectively, 2,2,5,5,2,2,3,3, and by $x_i, i=1,\dots, 8$ small loops around the $q_i$, all of them with the same orientation. Consider the epimorphism $\Phi$ defined by 
$$
\begin{array}{ccc}
\Phi\colon H_1(\O)& \to & \Z_2\times\Z_3\times\Z_5\cr
x_1 & \mapsto & (1,0,0) \cr
x_2 & \mapsto & (1,0,0) \cr
x_3 & \mapsto & (0,0,1) \cr
x_4 & \mapsto & (0,0,4) \cr
x_5 & \mapsto & (1,0,0) \cr
x_6 & \mapsto & (1,0,0) \cr
x_7 & \mapsto & (0,1,0) \cr
x_8 & \mapsto & (0,2,0) \cr
\end{array}
$$
It is clear that $\Phi$ determines a topological action since (in additive notation) $\sum_{i=1}^8\Phi(x_i)=(0,0,0)$.
 Now, consider a closed curve $\g$ separating $\O$ in two suborbifolds $\O_1,\O_2$, one of them containing the  points $x_1,x_2,x_3,x_4$ and the other the remaining ones. Let $\S=\{\g\}$. Then  we have that $\Im\Phi_1$ is isomorphic to $\Z_2\times \Z_5$; $\Im\Phi_2$ is isomorphic to $\Z_2\times \Z_3$ and $\Im\Phi_{\g}$ is trivial. Thus, by Proposition \ref{prop:Abelian-I}, $\DArriba$ is the graph $K^2_{3,5}$. We remark that the genus of the covering surface is $g=45$.

 For the general case  
consider an orbifold  $\O$  with signature $(0; d^{2d},m^{2m},n^{2n})$. Let $x_1,\dots, x_{2d}$ be loops around the singular points of order $d$,   $y_1,\dots, y_{2m}$ loops around the singular points of order $m$, and  $z_1,\dots, z_{2n}$ loops around the singular points of order $n$,   all of them with the same orientation. Let $\Phi\colon H_1(\O)  \to   \Z_d\times\Z_m\times\Z_n$ the epimorphism defined by
 $$
 \Phi(x_i)=(1,0,0),\quad  \Phi(y_i)=(0,1,0),\quad   \Phi(z_i)=(0,0,1).
 $$
 The order of $\Phi(x_i)$ is $d$, the  order of $\Phi(y_i)$ is $m$ and  order of $\Phi(z_i)$ is $n$; also the product of all $\Phi(x_i), \Phi(y_i), \Phi(z_i)$ is trivial. So $\Phi$ determines a topological  action with the covering surface of genus $g=1+dmn(d+m+n-4)$. 

 Now, consider a closed curve $\g$ separating $\O$ in two suborbifolds $\O_1,\O_2$, one of them containing the  points $x_1,\dots, x_d, y_1,\dots, y_{2m}$ and the other the remaining ones. Let $\S=\{\g\}$. Then  we have that $\Im\Phi_1$ is isomorphic to $\Z_d\times \Z_m$, $\Im\Phi_2$ is isomorphic to $\Z_d\times \Z_n$ and $\Im\Phi_{\g}$ is trivial. Thus, by Proposition \ref{prop:Abelian-I}, $\DArriba$ is the graph $K^d_{m,n}$.

\medskip

\noindent
{\bf Examples 2.}  (i) For $d>1$, let $\O$ be an orbifold of signature $(1; d,d)$ and let $\a,\b,x_1,x_2$ be a basis for its homology, where $x_1,x_2$ are loops  with the same orientation surrounding the cone points. Consider the epimorphism $\Phi\colon H_1(\O)\to \Z_d\times\Z_m$ defined as
$$
\Phi(\a)=(0,0), \quad   \Phi(\b)=(0,1), \quad  \Phi(x_1)=(1,0), \quad  \Phi(x_2)=(d-1,0).
$$
Then  $\Phi$ is a surface kernel epimorphism which determines a topological action with branching data $(1;d,d)$ and covering surface of genus $g=1+m(d-1)$.
Let $\g=\a$ and consider the multicurve $\S=\{\g\}$. Then $\Im\Phi_1$ is isomorphic to $\Z_d$, and $\Im\Phi_{\g}$ is trivial. Hence, by Proposition  \ref{prop:Abelian-II}, $\DArriba$ is the graph $C^d_{m}$.

(ii) For $d=1$, take an orbifold $\O$ of signature $(g;  \, )$ with $g\geq 2$, and let $\a_i,\b_i$ be  a homology basis. Consider the epimorphism $\Phi\colon H_1(\O)\to \Z_m\times\Z_{a }$ defined as
$$
\Phi(\a_1)=(1,0), \quad   \Phi(\a_i)=\Phi(\b_j)=(0,1), \hbox{ for all  } i>1 \hbox{  and for all  } j.
$$
Then $\Phi$ is a surface kernel epimorphism. Taking $\g=\b_1$, we have that $\Im \Phi_1=\Im\Phi_{\g}=\langle (0,1)\rangle$. Then, by Proposition \ref{prop:Abelian-II}, $\DArriba$ is the graph $C^1_{m}$.
 \medskip

 \noindent
 {\bf Examples 3.} 
  (i)  Let $\O$ be an orbifold of signature $(1;2^4)$ and let $\a,\b,x_1,x_2,x_3,x_4$ a homology basis, where the $x_i$ are the  loops surrounding the cone points, all with the same orientation. Let $\Phi\colon H_1(\O) \to    \Z_2\times\Z_a\times \Z_b$ be  the epimorphism defined as
 $$
 \Phi(\a)=( 0,1,0), \quad   \Phi(\b)=( 0,0,1), \quad  \Phi(x_1)=\Phi(x_2)=\Phi(x_3)=\Phi(x_4)=(1, 0,0).
 $$
 Let $\g$ be an arc joining the cone points $q_1,q_2$. Then $\Im\Phi_1=G$, $\Im\Phi_{\g}\approx  \Z_2$, so that $\DArriba$ has one vertex and $|G|/2$ loops. In this case the covering surface has genus $g= 1+2ab$.

 (ii) Let $\O$ be as before. Let $\Phi\colon H_1(\O) \to \Z_2\times \Z_2\times\Z_a\times \Z_b$ the epimorphism defined as
$$
 \begin{array}{ll }
 \Phi(\a)=(0,0,1,0), & \Phi(\b)=(0,0,0,1), \cr
 \Phi(x_1)=\Phi(x_2)=(1,0,0,0),  &\Phi(x_3)=\Phi(x_4)=(0,1, 0,0). 
 \end{array}
 $$
 Let $\g$ be an arc joining the cone points $q_1,q_3$. Then $\Im\Phi_1=G$, $\Im\Phi_{\g}\approx\Z_2\times\Z_2$, so that $\DArriba$ has one vertex and $|G|/4$ loops. In this case the covering surface has genus $g= 1+4ab$.

 (iii) Let $\O$ be an orbifold of signature $(1;2^2)$ and let $\a,\b,x_1,x_2$ a basis of homology. Let $\Phi\colon H_1(\O) \to \Z_2\times\Z_a\times \Z_b$ the epimorphism defined as
 $$
 \Phi(\a)=(0,1,0), \; \Phi(\b)=(0,0,1),\; \Phi(x_1)=(1,0,0),\; \Phi(x_2)=(1,0,0).
 $$
 Let $\g$ be an arc joining the two cone points and let $\S=\{\g\}$. Then  $\Im\Phi_1\approx\Z_a\times \Z_b$ and $\Im\Phi_{\g}\approx \Z_2$. Then $\DArriba$ has 2 vertices and $|G|/2$ edges joining them. In this case the covering surface has genus $g= 1+ab$.

 (iv) Let $\O$ be an orbifold with signature $(0;2^5)$ and let $x_i$ loops around the cone points $q_i$, all with the same orientation. 
Let $\Phi\colon H_1(\O) \to \Z_2\times\Z_2\times \Z_2$ the epimorphism defined as
 $$
 \Phi(x_1)=(1,1,0), \quad \Phi(x_2)=(1,0,1),\quad \Phi(x_3)=(0,1,1),\quad \Phi(x_4)=\Phi(x_5)=(0,0,1).
 $$
  Let $\g$ be an arc joining the   cone points  $q_1,q_2$ and let $\S=\{\g\}$. Then $\Im\Phi_{\g}=\langle (0,1,1),(1,0,1)\rangle$ and $\Im\Phi_1=\langle (0,1, 0),(0,0,1)\rangle$. Hence $\DArriba $ has two vertices and $2=\frac{|G|}{4}$ edges joining them. One can construct more examples from this one, for instance,  by adding genus $g$ to the orbifold $\O$, taking $G=\Z_2^3\times \Z_{a_1}\times \dots \times \Z_{a_g}$ and defining $\Phi$ for the extra generators $\a_i,\b_i$ as  $\Phi(\a_i)=\Phi(\b_i)=(0,\dots, 0,1,0, \dots, 0)$ where the $1$ is in the $(3+i)$-th place. We see that $|\Im\Phi_1|$ contains all the elements $(0,\dots, 0,1,0, \dots, 0)$ where the 1 is in the $i$-th place for any $i>1$. Thus, $\frac{|G|}{|\Im\Phi_1|} =2$ and $\DArriba$ has 2 vertices. On the other hand, 
  $$
  \Im\Phi_{\g}=\langle(1,1,0,\dots, 0), (1,0,1,0\dots, 0)\rangle,
  $$
  so it has 4 elements. Hence $\DArriba$ has  $\frac{|G|}{4} =2a_1\dots a_g$ edges joining the 2 vertices.


\end{document}